\documentclass[final, 1p, times]{elsarticle}

\usepackage{amssymb}
\usepackage{amsthm}
\usepackage{amscd}
\usepackage{amsmath}
\usepackage{amsfonts}
\usepackage{graphicx}
\usepackage{color}
\usepackage{mathrsfs}
\usepackage{titletoc}
\usepackage{titlesec}
\newtheorem{theorem}{Theorem}
\newtheorem{lemma}{Lemma}

\begin{document}		
	\begin{frontmatter}
	\title{A Trudinger-Moser inequality involving $L^p$-norm on a closed Riemann surface}
	\author{Mengjie Zhang}
	\ead{zhangmengjie@ruc.edu.cn}
	\address{School of Mathematics, Renmin University of China, Beijing 100872, P.R.China}		
	\begin{abstract}
In this paper, using the method of blow-up analysis, we obtained a Trudinger-Moser inequality involving $L^p$-norm on a closed Riemann surface and proved the existence of an extremal function for the corresponding Trudinger-Moser functional.
This extends an early result of Yang \cite{Yang2007}. Similar result in the Euclidean space was also established by Lu-Yang \cite{Lu-Y2009}.
\end{abstract}
		
	\begin{keyword}
	Trudinger-Moser inequality, Riemann surface, blow-up analysis, extremal function.\\
    2010 MSC: 46E35; 58J05.
	\end{keyword}
		
	\end{frontmatter}

\section{Introduction}
Let $\Omega\subseteq \mathbb{R}^2$ be a smooth bounded domain and $W_0^{1,2}(\Omega)$ be the completion of $C_0^{\infty}(\Omega)$ under the Sobolev norm
$\|\nabla_{\mathbb{R}^2} u\|_2=( \int_{\Omega}{|\nabla_{\mathbb{R}^2} u|^2}dx ) ^{{1}/{2}},$
where $\nabla_{\mathbb{R}^2}$ is the gradient operator on ${\mathbb{R}^2}$ and $\|\cdot\|_2$ denotes the standard $L^2$-norm.
The classical Trudinger-Moser inequality \cite{Yudovich, Pohozaev, Peetre, Trudinger1967, Moser1970}, as a limit case of the Sobolev embedding, says
    \begin{equation}\label{Trudinger-Moser}
    \sup_{u\in W_0^{1, 2}(\Omega), \, \|\nabla_{\mathbb{R}^2} u\|_2\leq 1}
    \int_\Omega e^{\ \beta u^2}dx<+\infty, \ \forall \ \beta\leq 4\pi.
    \end{equation}
Moreover, $4\pi$ is called the best
constant for this inequality in the sense that when $\beta> 4\pi$, all integrals in (\ref{Trudinger-Moser}) are still finite, but the supremum is infinite.
It is interesting to know whether or not the supremum in (\ref{Trudinger-Moser}) can be attained.
For this topic, we refer the reader to Carleson-Chang \cite{C-C}, Flucher \cite{Flucher}, Lin \cite{Lin}, Adimurthi-Struwe \cite{A-Struwe}, Li \cite{Li-JPDE,Li-Sci}, Yang \cite{Yang-IJM} and Lu-Yang  \cite{Lu-Y2009}.

There are many extensions of (\ref{Trudinger-Moser}).
Adimurthi-Druet \cite{AD} generalized (\ref{Trudinger-Moser}) to the following form:
    \begin{equation}\label{T-AM}
  \sup _ { u \in W _ { 0 } ^ { 1,2 } ( \Omega ) , \| \nabla_{\mathbb{R}^2} u \| _ { 2 } \leq 1 } \int _ { \Omega } e ^ { 4 \pi u ^ { 2 } \left( 1 + \alpha \| u \| _ { 2 } ^ { 2 } \right) } dx <+\infty, \ \forall\ 0 \leq \alpha < \lambda _ { 1 } ( \Omega ),
\end{equation}
where $\lambda _ { 1 } ( \Omega )$ is the first eigenvalue of the Laplacian with Dirichlet boundary condition in $\Omega .$
This inequality is sharp in the sense that if $\alpha \geq \lambda _ { 1 } ( \Omega )$, all integrals in (\ref{T-AM}) are still finite, but the supremum is infinite. Obviously, (\ref{T-AM}) is reduced to (\ref{Trudinger-Moser}) when $\alpha=0$.
Various extensions of the inequality (\ref{T-AM}) were obtained by Yang \cite{Yang-JFA-06,Yang-JDE-15}, Tintarev \cite{Tintarev} and Zhu \cite{ZhuJY} respectively.
It was extended by Lu-Yang \cite{Lu-Y2009} to a version involving $L ^ { p }$-norm for any $p > 1$, namely
\begin{equation}\label{T-LY}
\sup _ { u \in W _ { 0 } ^ { 1 ,2} ( \Omega ) , \| \nabla_{\mathbb{R}^2} u \| _ { 2 } \leq 1 } \int _ { \Omega } e ^ { 4 \pi \left( 1 + \alpha \| u \| _ { p } ^ { 2 } \right) u ^ { 2 } } d x < + \infty, \ \forall\ 0 \leq \alpha < \lambda _ { p } ( \Omega ),
\end{equation}
where $\lambda _ { p } ( \Omega ) = \inf _ { u \in W _ { 0 } ^ { 1,2 } ( \Omega ) , u \not\equiv 0 } { \| \nabla_{\mathbb{R}^2} u \| _ { 2 } ^ { 2 } } /{ \| u \| _ { p } ^ { 2 } }$. This inequality is sharp in the sense that all integrals in (\ref{T-LY}) are still finite when $ \alpha \geq \lambda_p ( \Omega)$, but the supremum is infinite.
Moreover, for sufficiently small $\alpha> 0$, the supremum is attained.
Analogs of (\ref{T-LY}) are naturally expected for the cases of
do \'O-de Souza \cite{do-de2014,do-de2016}, Nguyen \cite{N2017,N2018}, Li \cite{Lixiaomeng}, Li-Yang \cite{L-Y}, Zhu \cite{Zhu}, Fang-Zhang \cite{F-Z} and Yang-Zhu \cite{Yang-Zhu2018,Yang-Zhu2019}.

Trudinger-Moser inequalities were introduced on Riemannian manifolds by Aubin \cite{A}, Cherrier \cite{C} and Fontana \cite{Fontana}. In particular, let $( \Sigma ,\ g )$ be a closed Riemann surface, $W^ { 1 , 2 } ( \Sigma,\ g) $ the completion of $C ^ { \infty } ( \Sigma)$ under the norm
$\|u \| ^2_ { W^{1,2}( \Sigma,\ g) } =  \int _ { \Sigma }( u^2+| \nabla_g u | ^2)\, dv_g$, where $\nabla_g$ stands for the gradient operator on $(\Sigma,\ g)$.
Denote
$$
\mathcal { H } = \left\{ u \in W ^ { 1,2 } ( \Sigma,\ g) : \| \nabla_g u \| _ { 2 } \leq 1 , \int _\Sigma  {u}\ dv_g = 0 \right\}.
$$
Then there holds
\begin{equation}\label{T-M}
  \sup _ { u \in \mathcal { H } }\int _ { \Sigma } e ^ {\ \beta u^ 2} dv_g<+\infty, \ \forall \ \beta\leq 4\pi.
    \end{equation}
Moreover, $4\pi$ is called the best
constant for this inequality in the sense that when $\beta> 4\pi$, all integrals in (\ref{T-M}) are still finite, but the supremum is infinite.
Based on the works of Ding-Jost-Li-Wang \cite{DJLW} and Adimurthi-Struwe \cite{A-Struwe}, Li \cite{Li-JPDE,Li-Sci} proved the existence of extremals for the supremum in (\ref{T-M}).
Later Yang \cite{Yang2007} obtained the same inequality as (\ref{T-AM}) on a closed Riemann surface:
\begin{equation}\label{T-M1}
\sup _ { u \in \mathcal { H } }\int _ { \Sigma} e ^ {4\pi u^ 2(1+\alpha\parallel u\parallel_2^2) } dv_g<+\infty, \ \forall\ 0 \leq \alpha < \lambda _ { 1 } ( \Sigma),
\end{equation}
where $\lambda _ { 1 } ( \Sigma)$ is the first eigenvalue of the Laplace-Beltrami operator with respect to the metric $g$.
This inequality is sharp in the sense that if $\alpha \geq \lambda _ { 1 } ( \Sigma)$, all integrals in (\ref{T-M1}) are still finite, but the supremum is infinite. Furthermore, for sufficiently small $\alpha > 0 $, the supremum in (\ref{T-M1}) is attained.\\

In this paper, in view of (\ref{T-LY}), we expect (\ref{T-M1}) with $\|u\|_2$ replaced by $\|u\|_p$ for any $p > 1$. Our main result is now stated as follows:
\begin{theorem}\label{T1}
     Let $(\Sigma,g)$ be a closed Riemann surface. For any real number $p>1$, we set
\begin{equation}\label{la}
\lambda_{ p} = \inf _ { u \in W^ { 1,2 } ( \Sigma,\ g) , \int _ { \Sigma } u\, dv_g = 0 , u \not\equiv 0 } \frac { \| \nabla_g u \| _ { 2 } ^ { 2 } } { \| u \| _ { p } ^ { 2 } }
\end{equation}
and
\begin{equation*}
J^\alpha_{\beta}(u) = \int _ { \Sigma } e ^ {\ \beta u^ 2(1+\alpha\parallel u\parallel_p^2) } dv_g.
\end{equation*}
Then we have\\
($i$) for any $\alpha \geq \lambda_{ p},$ $ \sup _ { u \in \mathcal { H } } J^\alpha_{4\pi}(u) = +\infty$;\\
($ii$) for any $0 \leq \alpha < \lambda_{ p},$ $ \sup _ { u \in \mathcal { H } } J^\alpha_{4\pi}(u) < +\infty$;\\
($iii$) for sufficiently small $\alpha> 0$, there exists a function $u_\alpha \in \mathcal{H}$ such that $\sup _ { u \in \mathcal { H } } J^\alpha_{4\pi}(u) =J^\alpha_{4\pi}(u_\alpha).$
     \end{theorem}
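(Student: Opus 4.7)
The plan is to follow the blow-up analysis scheme developed by Ding-Jost-Li-Wang, Adimurthi-Struwe, Li, and Yang for (\ref{T-M}) and (\ref{T-M1}), adapted to accommodate the global $L^p$-norm in the exponent, in the spirit of Lu-Yang \cite{Lu-Y2009} in the Euclidean setting (\ref{T-LY}). For \emph{part (i)}, I would construct explicit test functions. Let $\phi_0$ be an extremal for $\lambda_p$ with $\|\phi_0\|_p = 1$, $\|\nabla_g\phi_0\|_2^2 = \lambda_p$, $\int_\Sigma\phi_0\,dv_g = 0$, fix $x_0 \in \Sigma$, and let $\tilde{M}_\epsilon$ be a Moser-type bubble concentrated at $x_0$ with its mean subtracted. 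Setting $u_\epsilon = s\phi_0 + t\tilde{M}_\epsilon$ with $s,t$ chosen so that $u_\epsilon \in \mathcal{H}$, one computes $\|u_\epsilon\|_p^2 \to s^2$, and on a ball of radius $\epsilon$ around $x_0$ the exponent $4\pi(1+\alpha\|u_\epsilon\|_p^2)u_\epsilon^2$ is at least $2t^2(1+\alpha s^2)\log(1/\epsilon) + O(1)$, so $J^\alpha_{4\pi}(u_\epsilon) \gtrsim \epsilon^{2-2t^2(1+\alpha s^2)}$. For $\alpha > \lambda_p$, taking small $s>0$ with $t^2 = 1 - s^2\lambda_p$ yields $(1-s^2\lambda_p)(1+\alpha s^2) > 1$, forcing $J^\alpha_{4\pi}(u_\epsilon) \to \infty$. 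The borderline case $\alpha = \lambda_p$ requires a Carleson-Chang/Adimurthi-Druet refinement in which $\tilde{M}_\epsilon$ carries an additional Green-function tail correction so that a positive cross-term pushes the exponent past the threshold.

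For \emph{part (ii)}, I would use subcritical approximation. First, for $\beta < 4\pi$ the supremum $\sup_\mathcal{H} J^\alpha_\beta$ is attained: along a maximizing sequence $u_k \rightharpoonup u_0$ in $W^{1,2}(\Sigma,g)$, the algebraic identity $(1+\alpha y)(1-\lambda_p y) \leq 1$ (valid for $0 \leq \alpha < \lambda_p$ and $0 \leq y \leq 1/\lambda_p$) combined with Lions' concentration-compactness lemma gives uniform integrability of $e^{\beta(1+\alpha\|u_k\|_p^2)u_k^2}$ when $u_0 \not\equiv 0$; the alternative $u_0 \equiv 0$ is trivial since Rellich gives $\|u_k\|_p \to 0$, hence $\beta(1+\alpha\|u_k\|_p^2) < 4\pi$ eventually. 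The resulting maximizer $u_\beta$ satisfies the Euler-Lagrange equation
\[
-\Delta_g u_\beta = \frac{1}{\lambda_\beta}\bigl[(1+\alpha\|u_\beta\|_p^2)\,u_\beta\,e^{\beta(1+\alpha\|u_\beta\|_p^2)u_\beta^2} + \alpha\,\gamma_\beta\,\|u_\beta\|_p^{2-p}|u_\beta|^{p-2}u_\beta\bigr] - \mu_\beta,
\]
with $\gamma_\beta = \int_\Sigma u_\beta^2 e^{\beta(1+\alpha\|u_\beta\|_p^2)u_\beta^2}\,dv_g$ and $\mu_\beta$ the Lagrange multiplier for $\int u_\beta = 0$. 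Let $\beta \nearrow 4\pi$. If $c_\beta := \max_\Sigma|u_\beta|$ stays bounded, elliptic regularity yields a $C^1$ subsequential limit attaining $\sup J^\alpha_{4\pi}$ (which already gives part (iii)); otherwise $u_\beta$ concentrates at a single point $x_0 \in \Sigma$, and after rescaling by the natural Moser scale a Liouville bubble emerges on $\mathbb{R}^2$, while $u_\beta/c_\beta$ tends weakly to a Green's function $G_{x_0}$. Capacity/Pohozaev-type estimates then produce the explicit upper bound $\sup_\mathcal{H} J^\alpha_{4\pi} \leq \mathrm{Vol}_g(\Sigma) + \pi e^{4\pi A_{x_0} + 1}$, where $A_{x_0}$ is the regular part of $G_{x_0}$ at $x_0$; this establishes (ii).

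For \emph{part (iii)}, I would upgrade the test functions of (i) by adding a Green-function correction in the style of Yang \cite{Yang2007}, and optimize over the correction. For sufficiently small $\alpha > 0$ the resulting lower bound strictly exceeds the upper bound $\mathrm{Vol}_g(\Sigma) + \pi e^{4\pi A_{x_0} + 1}$ (which depends on $\alpha$ only through lower-order terms), ruling out blow-up of $u_\beta$; hence $c_\beta$ is bounded and the subsequential limit of $u_\beta$ is the desired extremal. The principal technical obstacle is the blow-up analysis in (ii) in the presence of the nonlocal summand $\alpha\gamma_\beta\|u_\beta\|_p^{2-p}|u_\beta|^{p-2}u_\beta$ in the Euler-Lagrange equation, which is absent in the treatment of (\ref{T-M1}) and globally couples $u_\beta$ to its $L^p$-norm via $\gamma_\beta$. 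One must show that this term contributes only sub-leading corrections to both the bubble profile near $x_0$ and the capacity estimate in the neck region, and rule out any secondary concentration it could induce; carefully tracking the scaling of $\gamma_\beta$ and $\|u_\beta\|_p$ along the blow-up, and verifying that the nonlocal term stays lower-order relative to $(1+\alpha\|u_\beta\|_p^2)u_\beta e^{\beta(1+\alpha\|u_\beta\|_p^2)u_\beta^2}$, is the core technical difficulty of the paper.
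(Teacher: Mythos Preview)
Your outline matches the paper's strategy: subcritical maximizers, Euler--Lagrange equation with the extra $|u|^{p-2}u$ term, blow-up to a Liouville bubble, convergence of a rescaling of $u_\beta$ to a Green-type function, a capacity upper bound ${\rm Area}(\Sigma)+\pi e^{1+4\pi A_{x_0}}$, and a Green-corrected test sequence that beats this bound for small $\alpha$. Two points need correcting.

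First, in part~(ii) the scaling is inverted: it is $c_\beta u_\beta\to G$, not $u_\beta/c_\beta$. This matters, because the capacity estimate in the neck relies on $\int_{\Sigma\setminus B_\delta(x_0)}|\nabla_g u_\beta|^2\,dv_g = c_\beta^{-2}\int_{\Sigma\setminus B_\delta(x_0)}|\nabla_g G|^2\,dv_g + o(c_\beta^{-2})$; with your scaling the exterior energy would be of order $c_\beta^{2}$, which is impossible. Note also that the limit $G$ is not the standard Green function but solves
\[
\Delta_g G=\delta_{x_0}+\alpha\|G\|_p^{2-p}|G|^{p-2}G-{\rm const},
\]
the lower-order piece being exactly the trace of the nonlocal term you flagged; one has to check that it still admits the decomposition $G=-\frac{1}{2\pi}\log r+A_{x_0}+\sigma$ with $\sigma$ smooth.

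Second, for the borderline $\alpha=\lambda_p$ in part~(i), a Green-function tail on the Moser bubble will not produce a divergent integral: it only shifts the exponent by $O(1)$. The paper instead centers the bubble at a point where $v_0(x_0)>0$ ($v_0$ your $\phi_0$) and glues it to $t_\epsilon v_0$, choosing $t_\epsilon\to 0$ with $t_\epsilon\sqrt{-\log\epsilon}\to\infty$ but $t_\epsilon^2\sqrt{-\log\epsilon}\to 0$. Because $v_0$ is the $\lambda_p$-extremal, the $t_\epsilon^2$ contributions to $\|\nabla(\cdot)\|_2^2$ and to $\lambda_p\|\cdot\|_p^2$ cancel exactly; what survives is a positive cross term of order $t_\epsilon/\sqrt{-\log\epsilon}$ in $\|\nabla(\cdot)\|_2^{-2}$, coming from the boundary matching between the bubble and the tail, and after multiplication by the bubble height $(-\log\epsilon)$ this yields an exponent $\sim t_\epsilon\sqrt{-\log\epsilon}\to+\infty$. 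Your linear-combination ansatz for $\alpha>\lambda_p$ is fine, but at the threshold it is this eigenfunction tail (not a Green tail) that is needed.
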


Following the lines of Li \cite{Li-JPDE}, Yang \cite{Yang2007} and Lu-Yang \cite{Lu-Y2009}, we prove Theorem \ref{T1} by using the method of blow-up analysis.
The remaining part of this note is organized as follows:
In Section 2, we prove (Theorem \ref{T1}, ($i$)) by constructing test functions.
In Section 3, we prove (Theorem \ref{T1}, ($ii$)) in three steps:
firstly, the existence of maximizers for subcritical functionals $\sup _ { u \in \mathcal { H } }J^\alpha_{4\pi-\epsilon}(u)$ is proved, and the corresponding Euler-Lagrange equation is given;
secondly, the asymptotic behavior of maximizer is given by blow-up analysis;
finally, under the assumption that blow-up occurs, we deduce an upper bound of $\sup _ { u \in \mathcal { H } }J^\alpha_{4\pi}(u)$.
In Section 4, we construct a sequence of functions to show (Theorem \ref{T1}, ($iii$)) holds.

\section{Proof of Part ($i$)}
In this section, we select test functions to prove Part ($i$).
Let $\lambda_{ p}$ be defined by (\ref{la}) and $\alpha \geq \lambda_{ p}$.
Following Yang \cite{Yang2007}, we get $\lambda_{ p}$ is attained by some function $v _ { 0 } \in W^ { 1,2 } ( \Sigma,\ g) \cap C ^ { \infty } ( \Sigma)$ satisfying
$\int_{\Sigma}v_0\ dv_g=0$ and
\begin{equation}\label{a1}
\lambda_{ p} \lVert v_0 \rVert _{p}^{2}=\lVert \nabla_g v_0\lVert _{2}^{2}=1.
\end{equation}
Consequently, there exist a point $ x_0\in \Sigma$ with $v _ { 0 } ( x_0 ) > 0 $ and
a neighborhood $U$ of $x_0$ with $v_ { 0 }(x) \geq v _ { 0 } ( x_0 ) / 2$ in $U$ .
We take an isothermal coordinate system $(\phi^{-1}(\mathbb{B}_\delta),\phi)$ such that $\phi(x_0) = 0$ and $\phi^{-1}(\mathbb{B}_\delta)\subset U $. In such coordinates, the metric $g$ has the representation $g = e^{2f} (dx_1^2 +dx_2^2 )$ and $f$ is a smooth function with $f(0) = 0$.

Let $\delta = { t ^{-1}_ { \epsilon } \sqrt { -\log  { \epsilon } } }$, where $t _ { \epsilon } > 0$ such that $-t _ { \epsilon } ^ { 2 } \log  { \epsilon } \rightarrow + \infty$ and $t _ { \epsilon } ^ { 2 } \sqrt { -\log { \epsilon } } \rightarrow 0$ as $\epsilon\rightarrow0$.
We choose a continuous cut-off function $\eta(x)$,
which is equal to $0$ in $\phi ^{-1}\left( \mathbb{B}_{\delta} \right)$ and equal to $1$ in $\Sigma \backslash \phi ^{-1}\left( \mathbb{B}_{2\delta} \right)$ such that $ \eta(x)\in C ^ { \infty } ( \Sigma)$ and $| \nabla \eta | \leq 2 / \delta$.
Choosing some $x_\delta\in\mathbb{B}_\delta$ with $\left| x _ { \delta } \right| = \delta$, we set
\begin{equation*}
\bar{v}_{\epsilon}\left( x \right) =	\left\{
\linespread{1.5}\selectfont
\begin{aligned}
	&\sqrt{\frac{-\log{\epsilon}}{2\pi}},\quad \ \ \ \ \ \ \ \ \ \ \ \ \ \ \ \ \ &|x|<\epsilon,\ \ \ \ \ \ \\
	&\frac{\sqrt{{\frac{-\log{\epsilon}}{2\pi}}\left( \log \delta -\log|x| \right) -t_{\epsilon}\bar{v}_0\left( x_{\delta} \right) \left( \log \epsilon -\log|x| \right)}}{\log \delta -\log \epsilon},\quad &\epsilon \le |x|\le \delta,\\
\end{aligned} \right.
\end{equation*}
and
\begin{equation}\label{a3}
  v_{\epsilon}\left( x \right) =\left\{ \begin{aligned}
	&\ \bar{v}_{\epsilon}\circ\phi,\quad &\phi ^{-1}\left( \mathbb{B}_{\delta} \right), \ \ \ \ \\
	&\ t_{\epsilon}\left[\bar{ v}_0\left( x_{\delta} \right) +\eta \left( x \right) \left( v_0\left( x \right) -\bar{v}_0\left( x_{\delta} \right) \right) \right] ,\ &\Sigma \backslash \phi ^{-1}\left( \mathbb{B}_{\delta} \right),\\
\end{aligned} \right.
\end{equation}
where $\bar{v}_0=v_0\circ\phi^{-1}$.
Taking $v^*_{\epsilon}=v_{\epsilon}-\int_{\Sigma}v_{\epsilon}\,dv_g/{\rm Area}( \Sigma)$ such that $\int_{\Sigma}v_{\epsilon}^*\,dv_g=0$,
we obtain
\begin{equation}\label{a4}
\begin{aligned}
\int_{\Sigma}{\left| \nabla_g v_{\epsilon}^{*} \right|}^2dv_g
&=\int_{\phi ^{-1}\left( \mathbb{B}_{\delta} \right)}{\left| \nabla_g v_{\epsilon} \right|}^2dv_g+\int_{\phi ^{-1}\left( \mathbb{B}_{2\delta} \right) \backslash \phi ^{-1}\left( \mathbb{B}_{\delta} \right)}{\left| \nabla_g v_{\epsilon} \right|}^2dv_g+\int_{\Sigma \backslash \phi ^{-1}\left( \mathbb{B}_{2\delta} \right)}{\left| \nabla_g v_{\epsilon} \right|}^2dv_g
\\
&=\frac{2\pi \left( t_{\epsilon}\bar{v}_0\left( x_{\delta} \right) -\sqrt{\frac{-\log\epsilon}{2\pi}} \right)^2}{\log \delta -\log \epsilon}+t_{\epsilon}^{2}O( \delta ^2 ) +t_{\epsilon}^{2}\left( 1+O( \delta ^2 ) \right)
\\
&=1-2\sqrt{\frac{-2\pi}{\log\epsilon}}
t_{\epsilon}\bar{v}_0\left( x_{\delta} \right) \left( 1+o\left( 1 \right) \right) +t_{\epsilon}^{2}\left( 1+O( \delta ^2 ) \right).
\end{aligned}
\end{equation}
Setting $m_\epsilon=v_{\epsilon}^{*} /\lVert v_{\epsilon}^{*} \rVert _{2}^{2}\in\mathcal{H}$. According to (\ref{a1})-(\ref{a4}), we get
\begin{equation*}
  \begin{aligned}
	\lambda_{ p} \lVert m_{\epsilon} \rVert _{p}^{2}
& \ge \frac{\lambda_{ p}}{\lVert \nabla_g v_{\epsilon}^{*} \rVert _{2}^{2}}\left( \left( \int_{\Sigma \backslash \phi ^{-1}\left( \mathbb{B}_{2\delta} \right)}{v_{\epsilon}^{p}}dv_g \right) ^{\text{2/}p}+O( \delta ^2 ) \right)\\
	&=\frac{\lambda_{ p} t_{\epsilon}^{2}}{\lVert \nabla_g v_{\epsilon}^{*} \rVert _{2}^{2}}\left( \lVert v_0 \rVert _{p}^{2}+O( \delta ^2 ) \right)\\
	&=t_{\epsilon}^{2}\left(  \lambda_{ p} \lVert v_0 \rVert _{p}^{2}+O( \delta ^2 ) \right) \left( 1+O( t_{\epsilon}^{2} ) \right)\\
	&=t_{\epsilon}^{2}\left( 1+O( t_{\epsilon}^{2}) +O( \delta ^2 ) \right).\\
\end{aligned}
\end{equation*}
On $\phi ^{-1}\left( \mathbb{B}_{\epsilon} \right)$, combining $\bar{v} _ { 0 } \left( x _ { \delta } \right) = \bar{v} _ { 0 } ( 0 ) + o ( 1 )$ and $-t _ { \epsilon } ^ { 2 } \log{ \epsilon }\ O ( \delta ^ { 2 }) = O ( 1 )$, we have
\begin{equation*}\label{a5}
  \begin{aligned}
  4\pi \left( 1+\lambda_{ p} \lVert m_{\epsilon} \rVert _{p}^{2} \right) m_{\epsilon}^{2}=
  &4\pi \left( 1+\lambda_{ p} \lVert m_{\epsilon} \rVert _{p}^{2} \right) \frac{v_{\epsilon}^{2}+O(\delta ^2)}{\lVert \nabla_g v_{\epsilon}^{*} \rVert _{2}^{2}}\\
	\ge& \left( -\text{2}\log \epsilon +O(\delta ^2) \right) \left( 1+t_{\epsilon}^{2}\left( 1+O( t_{\epsilon}^{2}) +O(\delta ^2) \right) \right)\\
&\times \left( 1+2\sqrt{\frac{-2\pi}{\log \epsilon}}t_{\epsilon}\bar{v}_0\left(x_\delta \right) \left( 1+o\left( 1 \right) \right) -t_{\epsilon}^{2}\left( 1+O(\delta ^2) \right) \right)\\
=&-2\log \epsilon+ Ct_{\epsilon}\sqrt{- \log \epsilon}\,\bar{v}_0\left(0 \right) \left( 1+o\left( 1 \right) \right)+O\left( 1 \right).
\end{aligned}
\end{equation*}
Hence there holds
$$
\int_{\Sigma}{e}^{4\pi \left( 1+\lambda_{ p} \lVert m_{\epsilon} \rVert _{p}^{2} \right) m_{\epsilon}^{2}}dv_g
\ge Ce^{t_{\epsilon}\sqrt{-\log \epsilon}\,\bar{v}_0\left( 0 \right) \left( 1+o\left( 1 \right) \right)}\rightarrow +\infty
$$
as $\epsilon \rightarrow 0 $, where $C>0$ is independent of $\epsilon$.
Obviously, $ \sup _ { u \in \mathcal { H } } J^\alpha_{4\pi}(u) \ge \lim_{\epsilon \rightarrow 0}J^\alpha_{4\pi}( m_{\epsilon})=+\infty$ when $\alpha \geq \lambda_{ p}$.
	\section{Proof of Part ($ii$)}
Under the condition of $0 \leq \alpha < \lambda_{ p}$,
we first prove the existence of maximizers for subcritical functionals $\sup _ { u \in \mathcal { H } }J^\alpha_{4\pi-\epsilon}(u)$ and give the corresponding Euler-Lagrange equation.
Then we give the asymptotic behavior of maximizer by means of blow-up analysis.
Finally, we derive an upper bound of $\sup _ { u \in \mathcal { H } }J^\alpha_{4\pi}(u)$ for the occurrence of blow-up.

	\subsection{Existence of maximizers for subcritical functionals.}
\begin{lemma}\label{L1}
 Let $0 \leq \alpha < \lambda_{ p}$. For any $\epsilon > 0$, there exists some function $u_\epsilon \in \mathcal{H}  \cap C^{\infty} ( \Sigma)$
such that
\begin{equation*}\label{2}
  J^\alpha_{4\pi-\epsilon}(u_\epsilon)=\sup _ { u \in \mathcal { H } } J^\alpha_{4\pi-\epsilon}(u).
\end{equation*}
\end{lemma}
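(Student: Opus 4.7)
\emph{Plan.} I would apply the direct method of the calculus of variations. Fix $\epsilon>0$ and take a maximizing sequence $\{u_j\}_{j\geq 1}\subset\mathcal{H}$, so $J^\alpha_{4\pi-\epsilon}(u_j)\to\sup_{u\in\mathcal{H}}J^\alpha_{4\pi-\epsilon}(u)$. The constraints $\|\nabla_g u_j\|_2\leq 1$ and $\int_\Sigma u_j\,dv_g=0$ give a uniform bound in $W^{1,2}(\Sigma,g)$ via the Poincar\'e inequality. Banach--Alaoglu together with the Rellich--Kondrachov compact embedding $W^{1,2}(\Sigma,g)\hookrightarrow L^q(\Sigma,g)$ for every $q\geq 1$ lets me extract a subsequence, still labeled $\{u_j\}$, with $u_j\rightharpoonup u_\epsilon$ weakly in $W^{1,2}$, strongly in every $L^q$, and pointwise a.e.\ Weak lower semicontinuity of $\|\nabla_g\cdot\|_2$ and strong $L^1$-convergence of the mean place $u_\epsilon\in\mathcal{H}$, while strong $L^p$-convergence yields $\|u_j\|_p\to\|u_\epsilon\|_p$.

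The heart of the proof is to pass to the limit in the exponential integrals,
\[
\int_\Sigma e^{(4\pi-\epsilon)(1+\alpha\|u_j\|_p^2)u_j^2}\,dv_g\;\longrightarrow\;\int_\Sigma e^{(4\pi-\epsilon)(1+\alpha\|u_\epsilon\|_p^2)u_\epsilon^2}\,dv_g.
\]
Given the pointwise convergence, Vitali's convergence theorem reduces this to the uniform-integrability estimate
\[
\int_\Sigma e^{q(4\pi-\epsilon)(1+\alpha\|u_j\|_p^2)u_j^2}\,dv_g\leq C
\]
for some $q>1$ independent of $j$. I would obtain this by applying the classical Trudinger--Moser inequality (\ref{T-M}) to the normalized function $u_j/\|\nabla_g u_j\|_2$ (which also lies in $\mathcal{H}$), exploiting the strict subcritical gap $\epsilon>0$ together with the Poincar\'e-type bound $\alpha\|u_j\|_p^2\leq\alpha/\lambda_p$ delivered by (\ref{la}) and $\alpha<\lambda_p$ to choose $q$ strictly above $1$. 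If this naive pointwise estimate is too tight (when $\alpha$ is close to $\lambda_p$), I would upgrade to a Lions-type concentration-compactness refinement that uses the nontriviality of $u_\epsilon$; conversely, if $u_\epsilon\equiv 0$, then $\|u_j\|_p\to 0$, so the coefficient $(4\pi-\epsilon)(1+\alpha\|u_j\|_p^2)$ falls arbitrarily close to $4\pi-\epsilon<4\pi$ and the estimate is automatic via (\ref{T-M}) alone.

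With this convergence in hand, $u_\epsilon$ realizes the supremum (which is therefore finite). A standard first-variation argument with Lagrange multipliers for the constraints $\int_\Sigma u\,dv_g=0$ and $\|\nabla_g u\|_2^2=1$ (the latter is saturated at $u_\epsilon$, otherwise a rescaling $u_\epsilon\mapsto u_\epsilon/\|\nabla_g u_\epsilon\|_2$ would strictly increase $J^\alpha_{4\pi-\epsilon}$) yields the Euler--Lagrange equation, expressing $-\Delta_g u_\epsilon$ as a constant plus a smooth function of $u_\epsilon$ times $e^{(4\pi-\epsilon)(1+\alpha\|u_\epsilon\|_p^2)u_\epsilon^2}$. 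The right-hand side lies in $L^r(\Sigma,g)$ for every $r<\infty$, so standard elliptic regularity bootstrapping gives $u_\epsilon\in C^\infty(\Sigma)$. The main technical obstacle throughout is the uniform-integrability estimate, because the effective exponent $(4\pi-\epsilon)(1+\alpha\|u_j\|_p^2)$ may exceed the Trudinger--Moser threshold $4\pi$ when $\alpha$ is close to $\lambda_p$; overcoming this requires the combined use of both the strict subcritical gap $\epsilon$ and the strict spectral gap $\lambda_p-\alpha$.
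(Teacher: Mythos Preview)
Your approach is correct and matches what the paper intends: the paper omits the proof, citing it as ``completely analogous'' to Lemma~3.2 of \cite{Yang2007}, and the direct-method-plus-Lions scheme you outline is exactly that argument. Your handling of the key difficulty---that the effective exponent may exceed $4\pi$---via the dichotomy on whether $u_\epsilon\equiv 0$ (coefficient drops to $4\pi-\epsilon$) or $u_\epsilon\not\equiv 0$ (the Lions gain $\|\nabla_g(u_j-u_\epsilon)\|_2^2<1$ combines with $1+\alpha\|u_\epsilon\|_p^2<1+\|\nabla_g u_\epsilon\|_2^2$ via (\ref{la})) is the same computation the paper itself carries out later in its proof of Lemma~\ref{L3}.
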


Since the proof of Lemma \ref{L1} is completely analogous to that of ({\cite{Yang2007}}, Lemma 3.2), we omit it in this paper.
Moreover, we derive the Euler-Lagrange equation of $u_{\epsilon}$ by a direct calculation, namely
\begin{equation}\label{e-u}
	\left\{
\linespread{1.5}\selectfont
\begin{aligned}
&{\Delta_g u _ { \epsilon } = \frac { \beta _ { c } } { \lambda _ { \epsilon } } u _ { \epsilon } e ^ { \alpha _ { \epsilon } u _ { \epsilon } ^ { 2 } } + \gamma _ { \epsilon }\|u _ { \epsilon }\|^{2-p}_p|u _ { \epsilon }|^{p-2} u _ { \epsilon } - \frac { \mu _ { \epsilon } } { \lambda _ { \epsilon } } } ,\\
&{ \int _ { \Sigma } u _ { \epsilon }\, dv_g = 0 , \quad \left\| \nabla_g u _ { \epsilon } \right\| _ { 2 } = 1 } ,\\
&{ \alpha _ { \epsilon } = ( 4 \pi - \epsilon ) \left( 1 + \alpha \left\| u _ { \epsilon } \right\| _ { p } ^ { 2 } \right) } ,\\
&{ \beta _ { \epsilon } = \left( 1 + \alpha \left\| u _ { \epsilon } \right\| _ { p } ^ { 2 } \right) / \left( 1 + 2 \alpha \left\| u _ { \epsilon } \right\| _ { p } ^ { 2 } \right) } ,\\
&{ \gamma _ { \epsilon } = \alpha / \left( 1 + 2 \alpha \left\| u _ { \epsilon } \right\| _ { p } ^ { 2 } \right) } ,\ \ { \lambda _ { \epsilon } = \int _ { \Sigma } u _ { \epsilon } ^ { 2 } e ^ { \alpha _ { \epsilon } u _ { \epsilon } ^ { 2 } } dv_g }, \\
&{ \mu _ { \epsilon } = \left(\beta _ { \epsilon } \int _ { \Sigma } u _ { \epsilon } e ^ { \alpha _ { \epsilon } u _ { \epsilon } ^ { 2 } } d V _g+\lambda_\epsilon\gamma_\epsilon\|u_\epsilon\|^{2-p}_p\int _ { \Sigma } |u_\epsilon|^{p-2}u _ { \epsilon }\, dv_g\right)/{\rm Area}( \Sigma) }, \end{aligned} \right.
	\end{equation}
where $\Delta_g$ denotes the Laplace-Beltrami operator.
From Lebesgue's dominated convergence theorem and the nature of supremum, it is easy to get that
    \begin{equation}\label{3}
    {\lim_{\epsilon \rightarrow 0}}\ J^ \alpha_{4\pi-\epsilon}(u)=J^\alpha_{4\pi}(u).
    \end{equation}

	\begin{lemma}\label{L2}
    Let $\lambda _{\epsilon}$ be defined by (\ref{e-u}), then we have $\liminf_{\epsilon \rightarrow 0} \ \lambda _{\epsilon}>0.$
    \end{lemma}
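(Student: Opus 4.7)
The plan is to argue by contradiction: suppose, along a subsequence $\epsilon\to 0$, that $\lambda_\epsilon\to 0$. The goal is then to show that $J^\alpha_{4\pi-\epsilon}(u_\epsilon)$ collapses to $\mathrm{Area}(\Sigma)$, while an easy test-function argument forces the supremum to stay strictly above $\mathrm{Area}(\Sigma)$, yielding a contradiction.

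First I would record two basic uniform bounds. Since $u_\epsilon\in\mathcal{H}$, we have $\|\nabla_g u_\epsilon\|_2=1$ and $\int_\Sigma u_\epsilon\,dv_g=0$, so the Poincar\'e inequality on $(\Sigma,g)$ together with the Sobolev/Trudinger--Moser embedding $W^{1,2}(\Sigma,g)\hookrightarrow L^q(\Sigma,g)$ gives $\|u_\epsilon\|_p\le C$ for some $C$ independent of $\epsilon$. Consequently $\alpha_\epsilon=(4\pi-\epsilon)(1+\alpha\|u_\epsilon\|_p^2)\le C'$ uniformly in $\epsilon$.

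Next, to get a strict lower bound on the supremum, I would plug in the eigenfunction $v_0$ from (\ref{a1}), which already lies in $\mathcal{H}$ (it satisfies $\int_\Sigma v_0\,dv_g=0$ and $\|\nabla_g v_0\|_2=1$). Because $v_0\not\equiv 0$ and $e^t>1+t$ for $t>0$,
\begin{equation*}
J^\alpha_{4\pi}(v_0)=\int_\Sigma e^{4\pi v_0^{\,2}(1+\alpha\|v_0\|_p^2)}\,dv_g>\mathrm{Area}(\Sigma).
\end{equation*}
By (\ref{3}), $J^\alpha_{4\pi-\epsilon}(v_0)\to J^\alpha_{4\pi}(v_0)$, so there exist $\delta_0>0$ and $\epsilon_0>0$ such that
\begin{equation*}
\sup_{u\in\mathcal{H}} J^\alpha_{4\pi-\epsilon}(u)\ge J^\alpha_{4\pi-\epsilon}(v_0)\ge \mathrm{Area}(\Sigma)+\delta_0 \qquad \text{for all } 0<\epsilon<\epsilon_0.
\end{equation*}

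For the matching upper bound, I would use the elementary inequality $e^t\le 1+te^t$ for $t\ge 0$ with $t=\alpha_\epsilon u_\epsilon^{\,2}$, which gives
\begin{equation*}
J^\alpha_{4\pi-\epsilon}(u_\epsilon)=\int_\Sigma e^{\alpha_\epsilon u_\epsilon^{\,2}}\,dv_g
\le \mathrm{Area}(\Sigma)+\alpha_\epsilon\int_\Sigma u_\epsilon^{\,2} e^{\alpha_\epsilon u_\epsilon^{\,2}}\,dv_g
=\mathrm{Area}(\Sigma)+\alpha_\epsilon\lambda_\epsilon.
\end{equation*}
Under the assumption $\lambda_\epsilon\to 0$ and the uniform bound on $\alpha_\epsilon$, the right-hand side tends to $\mathrm{Area}(\Sigma)$, contradicting the lower bound $\mathrm{Area}(\Sigma)+\delta_0$ established above. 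Hence $\liminf_{\epsilon\to 0}\lambda_\epsilon>0$.

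I expect no serious obstacle: the only nontrivial points are the uniform $L^p$-bound on $u_\epsilon$ (which is standard on a closed surface given the normalization in $\mathcal{H}$) and choosing a legitimate competitor; $v_0$ is tailor-made for this because the analysis of Part (i) already singled it out. If for some reason $v_0$ were inconvenient, any nonconstant element of $\mathcal{H}$ would work equally well, so this step is robust.
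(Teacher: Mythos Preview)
Your proof is correct and is essentially the same as the paper's: both use the elementary inequality $e^t\le 1+te^t$ to obtain $\alpha_\epsilon\lambda_\epsilon\ge J^\alpha_{4\pi-\epsilon}(u_\epsilon)-\mathrm{Area}(\Sigma)$, combine this with the uniform bound on $\alpha_\epsilon$, and conclude via the fact that $J^\alpha_{4\pi-\epsilon}(u_\epsilon)=\sup_{u\in\mathcal{H}}J^\alpha_{4\pi-\epsilon}(u)$ stays strictly above $\mathrm{Area}(\Sigma)$ by testing against a fixed nonconstant function in $\mathcal{H}$. The only cosmetic differences are that you phrase it as a contradiction and spell out the test-function step explicitly (using $v_0$), whereas the paper argues directly and simply cites Lemma~\ref{L1} and (\ref{3}).
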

	
	\begin{proof}
	Using the fact $e^t\leqslant 1+te^t$ for any $t\geqslant 0 $, we have
$$
{ \lambda _ { \epsilon } = \int _ { \Sigma } u _ { \epsilon } ^ { 2 } e ^ { \alpha _ { \epsilon } u _ { \epsilon } ^ { 2 } } dv_g }
\geqslant \frac{1}{\alpha_\epsilon}\int _ { \Sigma }( e ^ { \alpha _ { \epsilon } u _ { \epsilon } ^ { 2 } }-1 )\ dv_g.
$$
Together with the fact that $\alpha_\epsilon$ is bounded, (\ref{2}) and (\ref{3}), we imply the lemma.
\end{proof}

\begin{lemma}
$\mu _ { \epsilon } / \lambda _ { \epsilon }$ is bounded.
\end{lemma}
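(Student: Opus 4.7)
The plan is to split the numerator of $\mu_\epsilon/\lambda_\epsilon$ along the two summands in the definition of $\mu_\epsilon$ and bound each piece separately, invoking Lemma~\ref{L2} only for the first. Two a priori facts will do most of the work: the coefficients $\alpha_\epsilon,\beta_\epsilon,\gamma_\epsilon$ are uniformly bounded in $\epsilon$, because the variational definition \eqref{la} applied to $u_\epsilon\in\mathcal H$ yields the crude bound $\|u_\epsilon\|_p^2\leq 1/\lambda_p$; and $\liminf_{\epsilon\to 0}\lambda_\epsilon>0$ is Lemma~\ref{L2}.

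For the exponential summand I would use the elementary pointwise inequality $|s|\leq 1+s^2$ with $s=u_\epsilon$, which gives
\[
\Bigl|\int_\Sigma u_\epsilon e^{\alpha_\epsilon u_\epsilon^2}\,dv_g\Bigr|\leq e^{\alpha_\epsilon}\,\mathrm{Area}(\Sigma)+\int_\Sigma u_\epsilon^2 e^{\alpha_\epsilon u_\epsilon^2}\,dv_g=e^{\alpha_\epsilon}\,\mathrm{Area}(\Sigma)+\lambda_\epsilon.
\]
After dividing by $\lambda_\epsilon$, the first term on the right is controlled by Lemma~\ref{L2} together with the uniform bound on $\alpha_\epsilon$, and the second becomes a constant, so the whole contribution to $\mu_\epsilon/\lambda_\epsilon$ stays bounded.

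For the $L^p$-summand the factor $\lambda_\epsilon$ cancels cleanly against the denominator, so it suffices to estimate $\gamma_\epsilon\|u_\epsilon\|_p^{2-p}\int_\Sigma|u_\epsilon|^{p-2}u_\epsilon\,dv_g$. H\"older's inequality with conjugate exponents $p/(p-1)$ and $p$ controls $\int_\Sigma|u_\epsilon|^{p-1}\,dv_g$ by $\mathrm{Area}(\Sigma)^{1/p}\|u_\epsilon\|_p^{p-1}$, reducing the bound to a constant multiple of $\gamma_\epsilon\|u_\epsilon\|_p$, which is uniform thanks to $\|u_\epsilon\|_p^2\leq 1/\lambda_p$ already noted.

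No step here is genuinely hard; the only points that require attention are the exact cancellation of $\lambda_\epsilon$ in the second summand (which makes its possible smallness or largeness irrelevant there) and the elementary dominator $|s|e^{\alpha s^2}\leq e^\alpha+s^2 e^{\alpha s^2}$ that re-expresses the first summand directly in terms of $\lambda_\epsilon$, so that Lemma~\ref{L2} is precisely what is needed.
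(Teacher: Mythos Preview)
Your proof is correct and follows essentially the same approach as the paper: the paper splits the integral $\int_\Sigma u_\epsilon e^{\alpha_\epsilon u_\epsilon^2}dv_g$ according to $|u_\epsilon|\ge 1$ and $|u_\epsilon|<1$, which amounts precisely to your pointwise dominator $|s|e^{\alpha s^2}\le e^\alpha+s^2e^{\alpha s^2}$, and then handles the $L^p$-summand exactly as you do via H\"older and the bound $\|u_\epsilon\|_p^2\le 1/\lambda_p$. One small expository remark: the inequality $|s|\le 1+s^2$ you first cite does not by itself produce the $e^{\alpha_\epsilon}\mathrm{Area}(\Sigma)$ term---it is the refined dominator you state at the end that does the job, so you may want to lead with that.
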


\begin{proof}
According to (\ref{e-u}), the fact of $\beta _ { \epsilon } \leq 1$ and Lemma \ref{L2}, we have
\begin{equation*}
  \begin{aligned}
  \left| \frac{\mu _{\epsilon}}{\lambda _{\epsilon}} \right|
&\le \frac{1}{{\rm Area}\left( \Sigma \right)}\left( \frac{\beta _{\epsilon}}{\lambda _{\epsilon}}\int_{\left| u_{\epsilon} \right|\ge 1}{u}_{\epsilon}e^{\alpha _{\epsilon}u_{\epsilon}^{2}}dv_g+\frac{\beta _{\epsilon}}{\lambda _{\epsilon}}\int_{\left| u_{\epsilon} \right|<1}{u}_{\epsilon}e^{\alpha _{\epsilon}u_{\epsilon}^{2}}dv_g+\gamma _{\epsilon}\|u_{\epsilon}\|_{p}^{2-p}\int_{\Sigma}{|}u_{\epsilon}|^{p-2}u_{\epsilon}\,dv_g \right)\\
&\le \frac{1}{{\rm Area}\left( \Sigma \right)}\left( 1+\frac{e^{\alpha _{\epsilon}}}{\lambda _{\epsilon}}{\rm Area}\left( \Sigma \right) +\gamma _{\epsilon}\|u_{\epsilon}\|_{p}^{2-p}\int_{\Sigma}{|}u_{\epsilon}|^{p-2}u_{\epsilon}\,dv_g \right) \le C.
  \end{aligned}
\end{equation*}
\end{proof}

\subsection{Blow-up analysis}
We now perform the blow-up analysis on $u_\epsilon$.
Here and in the sequel, we do not distinguish sequence and subsequence.
Since $u_{\epsilon}\in \mathcal{H} $,
there exists some function $u_0$ such that $u_{\epsilon}\rightharpoonup u_0$ weakly in
$W^{1, 2}( \Sigma ,\ g)$ and $u_{\epsilon}\rightarrow u_0$ in $ L^q( \Sigma,\ g)$ as $\epsilon\rightarrow0$.
With no loss of generality, we assume in the following, $c_{\epsilon}=|u_{\epsilon}\left( x_\epsilon \right)| ={\max}_{\Sigma}\ u_{\epsilon}\rightarrow+\infty$ and $x_\epsilon\rightarrow x_0$ as $\epsilon\rightarrow0$.

\begin{lemma}\label{L3}
There hold $u_0=0$ and $ |\nabla_g u_{\epsilon}|^2dv_g\rightharpoonup \delta _{x_0} $ in sense of measure, where $\delta _{x_0}$ is the usual Dirac measure centered at $x_0$.
\end{lemma}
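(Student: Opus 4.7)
The strategy is a standard concentration-compactness argument, carried out by contradiction in two steps: first rule out a nontrivial weak limit, then rule out that the gradient energy spreads away from $x_{0}$. Throughout, the main tool is the Lions-type inequality on the closed surface (the analogue of Lions' concentration-compactness for Moser functionals, as used in Yang \cite{Yang2007} and Li \cite{Li-JPDE}): if $w_\epsilon\rightharpoonup 0$ weakly in $W^{1,2}(\Sigma,g)$ with $\limsup\|\nabla_g w_\epsilon\|_2^2=\tau<1$, then $\sup_\epsilon\int_\Sigma e^{qw_\epsilon^2}dv_g<\infty$ for every $q<4\pi/\tau$.

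\textbf{Step 1 (showing $u_0\equiv 0$).} Suppose not. Writing $w_\epsilon=u_\epsilon-u_0$, weak convergence and $\|\nabla_g u_\epsilon\|_2=1$ give $\|\nabla_g w_\epsilon\|_2^2=1-\|\nabla_g u_0\|_2^2+o(1)=:\tau<1$, while $\int_\Sigma w_\epsilon\,dv_g\to 0$. For any $\eta>0$, use $u_\epsilon^2\le(1+\eta)w_\epsilon^2+C_\eta u_0^2$ together with $u_0\in L^\infty$ (up to a routine $L^r$ approximation of $u_0$) to bound $\alpha_\epsilon u_\epsilon^2$ by $(1+\eta)\alpha_\epsilon w_\epsilon^2+O(1)$. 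Since $\alpha_\epsilon\to 4\pi(1+\alpha\|u_0\|_p^2)$ is bounded, we can pick $\eta$ so small that $(1+\eta)\alpha_\epsilon<4\pi/\tau$ for all small $\epsilon$, and the Lions-type inequality gives $\sup_\epsilon\|e^{\alpha_\epsilon u_\epsilon^2}\|_{L^{s}(\Sigma)}<\infty$ for some $s>1$. Feeding this into the Euler--Lagrange equation (\ref{e-u}) and using that $\mu_\epsilon/\lambda_\epsilon$, $\gamma_\epsilon$, and $\|u_\epsilon\|_p^{2-p}|u_\epsilon|^{p-2}$ are uniformly bounded in a suitable $L^r$, standard elliptic $L^p$ estimates together with Sobolev embedding on the surface yield $\|u_\epsilon\|_{L^\infty(\Sigma)}\le C$, contradicting $c_\epsilon\to\infty$.

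\textbf{Step 2 (concentration at $x_0$).} Knowing $u_0=0$, let $\nu$ be any weak-$\ast$ limit of the probability measures $|\nabla_g u_\epsilon|^2 dv_g$ on $\Sigma$. If $\nu(\{x_0\})<1$, there exist $r>0$ and $\delta>0$ with $\int_{\phi^{-1}(\mathbb{B}_r)}|\nabla_g u_\epsilon|^2 dv_g\le 1-\delta$ for small $\epsilon$. Choose a cutoff $\zeta\in C^\infty_0(\phi^{-1}(\mathbb{B}_r))$ equal to $1$ near $x_0$; then $\|\nabla_g(\zeta u_\epsilon)\|_2^2\le 1-\delta/2$ since $u_\epsilon\to 0$ in $L^2$. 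Applying the Lions-type inequality to $\zeta u_\epsilon$ (on a local chart, via the isothermal representation $g=e^{2f}(dx_1^2+dx_2^2)$) shows $e^{\alpha_\epsilon u_\epsilon^2}$ is bounded in $L^s$ on a neighborhood of $x_0$ for some $s>1$. Elliptic regularity applied to (\ref{e-u}) on that neighborhood then gives $u_\epsilon=O(1)$ near $x_0$, again contradicting $u_\epsilon(x_\epsilon)=c_\epsilon\to\infty$ with $x_\epsilon\to x_0$. Hence $\nu(\{x_0\})=1$, and since $\nu$ has total mass $1$, we conclude $\nu=\delta_{x_0}$.

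The main technical obstacle is Step 1: one has to juggle the non-local terms ($\mu_\epsilon/\lambda_\epsilon$, $\beta_\epsilon$, and the $\|u_\epsilon\|_p^{2-p}|u_\epsilon|^{p-2}u_\epsilon$ term introduced by the $L^p$-constraint) while applying elliptic bootstrap, and to check that the exponent $(1+\eta)\alpha_\epsilon$ can really be pushed below the threshold $4\pi/\tau$. Once the $L^s$-boundedness of $e^{\alpha_\epsilon u_\epsilon^2}$ is in hand, the bootstrap is standard; the new ingredient compared with \cite{Yang2007} is only that the extra lower-order term $\gamma_\epsilon\|u_\epsilon\|_p^{2-p}|u_\epsilon|^{p-2}u_\epsilon$ lies in a higher $L^r$ space by interpolation, so it does not spoil the regularity argument.
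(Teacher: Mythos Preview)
Your approach matches the paper's: both steps proceed by contradiction via a Lions-type Trudinger--Moser bound followed by elliptic bootstrap on (\ref{e-u}). The paper's Step~1 uses the H\"older split $e^{q\alpha_\epsilon u_\epsilon^2}\le e^{q\alpha_\epsilon(1+1/\delta)u_0^2}\,e^{q\alpha_\epsilon(1+\delta)(u_\epsilon-u_0)^2}$ and then controls the first factor by Trudinger--Moser on $u_0$ (so no $L^\infty$/approximation detour is needed); Step~2 is essentially identical to yours, with the same cutoff-in-a-chart argument.

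One point deserves more care. In Step~1 you write ``since $\alpha_\epsilon$ is bounded, we can pick $\eta$ so small that $(1+\eta)\alpha_\epsilon<4\pi/\tau$.'' Mere boundedness of $\alpha_\epsilon$ does not give this; what you actually need is $(\lim\alpha_\epsilon)\cdot\tau<4\pi$, i.e.\ $(1+\alpha\|u_0\|_p^2)(1-\|\nabla_g u_0\|_2^2)<1$. This is precisely where the hypothesis $\alpha<\lambda_p$ enters: it yields $\alpha\|u_0\|_p^2<\|\nabla_g u_0\|_2^2$, whence the product is below $1-\|\nabla_g u_0\|_2^4<1$. The paper makes this step explicit. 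You do flag the threshold check as a ``technical obstacle'' at the end, but as written the argument reads as if boundedness suffices, which it does not; you should invoke $\alpha<\lambda_p$ directly.
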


\begin{proof}
Suppose $u_0 \not\equiv 0$, we can see that $0< \eta:=\lVert \nabla_g u_0 \rVert _{2}^{2}\leq1$. Hence $\lVert \nabla_g \left( u_{\epsilon}-u_0 \right) \rVert _{2}^{2}\rightarrow 1-\eta<1$ and $1 + \alpha \left\| u _ { \epsilon } \right\| _ { p } ^ { 2 } \rightarrow 1 + \alpha \left\| u _ { 0 } \right\| _ { 2 } ^ { 2 } \leq 1 + \left\| \nabla_g u _ { 0 } \right\| _p^ { 2 }=1+\eta$.
For sufficiently small $\epsilon$, we obtain $(1 + \alpha \left\| u _ { \epsilon } \right\| _ { p } ^ { 2 } )\lVert \nabla_g \left( u_{\epsilon}-u_0 \right) \rVert _{2}^{2}\leqslant  (2-\eta^2)/2<1$.
By the H\"older inequality, there holds
\begin{equation*}
\begin{aligned}
\int_{\Sigma}{e^{q\alpha _{\epsilon}u_{\epsilon}^{2}}}dv_g&\leqslant \int_{\Sigma}{e^{q\alpha _{\epsilon}\left( 1+1/\delta \right) u_{0}^{2}+q\alpha _{\epsilon}\left( 1+\delta \right) \left( u_{\epsilon}-u_0 \right) ^2}}dv_g\\
	&\leqslant \left( \int_{\Sigma}{e^{rq\alpha _{\epsilon}\left( 1+1/\delta \right) u_{0}^{2}}}dv_g \right) ^{\frac{1}{r}}\left( \int_{\Sigma}{e^{sq\left( 1+\delta \right) \left( 4\pi -\epsilon \right) \left( 1+\alpha \lVert u_{\epsilon}\lVert _{p}^{2} \right) \lVert \nabla_g \left( u_{\epsilon}-u_0 \right) \rVert _{2}^{2}\frac{\left( u_{\epsilon}-u_0 \right) ^2}{\lVert \nabla_g \left( u_{\epsilon}-u_0 \right) \rVert _{2}^{2}}}}dv_g \right) ^{\frac{1}{s}}\\
	&\leqslant C\left( \int_{\Sigma}{e^{sq\left( 1+\delta \right) \left( 4\pi -\epsilon \right)\left(\frac{2-\eta}{2}\right) \frac{\left( u_{\epsilon}-u_0 \right) ^2}{\lVert \nabla_g \left( u_{\epsilon}-u_0 \right) \rVert _{2}^{2}}}}dv_g \right) ^{\frac{1}{s}}
\end{aligned}
\end{equation*}
for $\delta >0$, $r,\ s,\ q>1$ satisfying ${1}/{r}+{1}/{s}=1$ and $sq\left( 1+\delta \right) \left( 2-\eta \right) <2 $. From the Trudinger-Moser inequality (\ref{T-M}), we get $e^{\alpha _{\epsilon}u_{\epsilon}^{2}}$ is bounded in $L^q\left( \Sigma,\ g \right) $. Hence $\Delta_g u_{\epsilon}$ is bounded in some $L^q\left( \Sigma,\ g \right)$ from (\ref{e-u}). Applying the elliptic estimate to (\ref{e-u}), one gets $u_{\epsilon}$ is uniformly bounded, which contradicts $c_{\epsilon}\rightarrow +\infty$. Therefore, the assumption is not established.

Suppose $|\nabla_g u_{\epsilon}|^2dv_g\rightharpoonup \mu \ne \delta_{x_0}$ in sense of measure. There exists some $r_0>0$ such that ${\lim}_{\epsilon \rightarrow 0}\int_{B_{r_0}(x_0)}{\left| \nabla_g u_{\epsilon} \right|}^2dv_g=\eta <1$. For sufficiently small $\epsilon$, we can see that $\int_{B_{r_0}(x_0)}{\left| \nabla_g u_{\epsilon} \right|}^2dv_g\leqslant (\eta +1)/{2}<1$.
We take an isothermal coordinate system $(U,\phi)$ near $x_0$ such that $\phi(x_0) = 0$. In such coordinates, the metric $g$ has the representation $g = e^{2f} (dx_1^2 +dx_2^2 )$ and $f$ is a smooth function with $f(0) = 0$. Denote $\bar{u}_\epsilon=u_\epsilon\circ\phi^{-1}$.
Then we choose a cut-off function $\psi$ in $C_{0}^{1}\left(\phi\left(B_{r_0}(x_0)\right) \right)$, which is equal to $1$ in $\phi\left(B_{r_0/2}(x_0)\right) $  such that
$\lVert \nabla_g \left( \psi \bar{u}_{\epsilon} \right) \lVert _{2}^{2}
\leqslant (\eta +3)/{4}<1.$
Hence we obtain
\begin{equation*}
\begin{aligned}
\int_{ B_{r_0/2}\left( x_0 \right) }{e^{\alpha _{\epsilon}qu_{\epsilon}^{2}}}dv_g&=\int_{\phi \left( B_{r_0/2}\left( x_0 \right) \right)}{e^{\alpha _{\epsilon}q\bar{u}_{\epsilon}^{2}}}e^{2f}dx
\\
&\leq C\int_{\phi \left( B_{r_0}\left( x_0 \right) \right)}{e^{\alpha _{\epsilon}q\left( \psi \bar{u}_{\epsilon} \right) ^2}}dx
\\
&\leq C\int_{\phi \left( B_{r_0}\left( x_0 \right) \right)}{e^{\alpha _{\epsilon}q  \frac{\eta +3}{4}  \frac{\left( \psi \bar{u}_{\epsilon} \right) ^2}{\lVert \nabla_g \left( \psi \bar{u}_{\epsilon} \right) \lVert _{2}^{2} }}}dx.
\end{aligned}
\end{equation*}
From the Trudinger-Moser inequality (\ref{T-M}),  we get $e^{\alpha _{\epsilon}u_{\epsilon}^{2}}$ is bounded in $L^q\left( B_{r_0/2}(x_0),\ g\right) $ for any $q>1$ satisfying $q(\eta +3)/4\leq1$. Applying the elliptic estimate to (\ref{e-u}), one gets $u_{\epsilon}$ is uniformly bounded in $B_{r_0/4}\left( x_0 \right) $, which contradicts $c_{\epsilon} \rightarrow +\infty$. Therefore, Lemma \ref{L3} follows.
	\end{proof}

Now we analyse the asymptotic behavior of $u_\epsilon$ near the concentration point $x_0$.

\begin{lemma}\label{L4}
Let
  \begin{equation}\label{r}
  r _ { \epsilon } ^ { 2 } = \frac { \lambda _ { \epsilon } } { \beta _ { \epsilon } c _ { \epsilon } ^ { 2 }  e ^ { \alpha _ { \epsilon } c _ { \epsilon } ^ { 2 } }}.
  \end{equation}
Then for any positive integer $k$, there holds $\lim_{\epsilon\rightarrow0}r_\epsilon^2c_\epsilon^k= 0$.
\end{lemma}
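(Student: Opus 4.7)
My plan is to pull out the leading exponential factor and then control the resulting integral by a level-set decomposition, after which the claim follows from elementary comparison between exponential and polynomial growth. Since $\beta_\epsilon = (1+\alpha\|u_\epsilon\|_p^2)/(1+2\alpha\|u_\epsilon\|_p^2) \in [1/2,1]$ uniformly in $\epsilon$, it is enough to show that $\lambda_\epsilon\, c_\epsilon^{k-2}\, e^{-\alpha_\epsilon c_\epsilon^2} \to 0$ for every positive integer $k$. The starting identity is
$$\lambda_\epsilon\, e^{-\alpha_\epsilon c_\epsilon^2} = \int_\Sigma u_\epsilon^2\, e^{-\alpha_\epsilon(c_\epsilon^2 - u_\epsilon^2)}\, dv_g,$$
whose integrand is pointwise bounded by $c_\epsilon^2$ thanks to the convention $|u_\epsilon|\le c_\epsilon$.

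For a fixed large constant $L>0$ I would split $\Sigma = \Sigma_1^\epsilon \cup \Sigma_2^\epsilon$ with $\Sigma_1^\epsilon = \{|u_\epsilon| < c_\epsilon - L\}$ and $\Sigma_2^\epsilon = \{|u_\epsilon| \ge c_\epsilon - L\}$. On $\Sigma_1^\epsilon$, once $c_\epsilon\ge L$ one has $c_\epsilon^2 - u_\epsilon^2 \ge L c_\epsilon$, so this part contributes at most $c_\epsilon^2\, e^{-\alpha_\epsilon L c_\epsilon}\,{\rm Area}(\Sigma)$. On $\Sigma_2^\epsilon$, Chebyshev's inequality yields
$$|\Sigma_2^\epsilon| \le \frac{\|u_\epsilon\|_q^q}{(c_\epsilon - L)^q} \le \frac{2^q\|u_\epsilon\|_q^q}{c_\epsilon^q}$$
for any $q>1$ and $c_\epsilon \ge 2L$, and the crude bound $u_\epsilon^2\, e^{-\alpha_\epsilon(c_\epsilon^2 - u_\epsilon^2)} \le c_\epsilon^2$ turns this into a contribution of at most $2^q\|u_\epsilon\|_q^q\, c_\epsilon^{2-q}$. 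Combining both estimates,
$$r_\epsilon^2\, c_\epsilon^k \le \beta_\epsilon^{-1}\Bigl(c_\epsilon^k\, e^{-\alpha_\epsilon L c_\epsilon}\,{\rm Area}(\Sigma) + 2^q\|u_\epsilon\|_q^q\, c_\epsilon^{k-q}\Bigr).$$

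Given $k$, I would choose $q = k+1$, which makes both terms tend to zero: the first because the exponential $e^{-\alpha_\epsilon L c_\epsilon}$ beats the polynomial $c_\epsilon^k$ (recall $\alpha_\epsilon\to 4\pi$ and $c_\epsilon\to +\infty$), and the second because $\|u_\epsilon\|_{k+1}\to 0$ by Lemma \ref{L3} together with the compact Sobolev embedding $W^{1,2}(\Sigma,g)\hookrightarrow L^{k+1}(\Sigma,g)$, while $c_\epsilon^{k-q}=c_\epsilon^{-1}\to 0$. I do not anticipate a serious obstacle here: the argument rests on the factorization $e^{\alpha_\epsilon u_\epsilon^2} = e^{\alpha_\epsilon c_\epsilon^2}\, e^{-\alpha_\epsilon(c_\epsilon^2 - u_\epsilon^2)}$ and the crude pointwise control $|u_\epsilon|\le c_\epsilon$, after which splitting at the level $c_\epsilon - L$ and applying Chebyshev is routine. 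The only mild subtlety is justifying the global bound $|u_\epsilon|\le c_\epsilon$, which is built into the choice of $c_\epsilon$ as the $L^\infty$-norm of the blow-up sequence.
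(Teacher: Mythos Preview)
Your argument is correct and takes a genuinely different route from the paper. The paper bounds $r_\epsilon^2 c_\epsilon^k$ by writing $e^{\alpha_\epsilon u_\epsilon^2}=e^{\delta\alpha_\epsilon u_\epsilon^2}e^{(1-\delta)\alpha_\epsilon u_\epsilon^2}$, absorbing the first factor into $e^{\delta\alpha_\epsilon c_\epsilon^2}$, and then applying H\"older together with the Trudinger--Moser inequality~(\ref{T-M1}) to control $\int_\Sigma e^{(1-\delta)s\alpha_\epsilon u_\epsilon^2}\,dv_g$ uniformly; the leftover factor $e^{-(1-\delta)\alpha_\epsilon c_\epsilon^2}$ then kills any polynomial in $c_\epsilon$. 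Your level-set decomposition at height $c_\epsilon-L$ replaces this machinery with Chebyshev's inequality and the bare Sobolev embedding, so you never invoke Trudinger--Moser at all. What your approach buys is self-containment: it shows that the scale estimate $r_\epsilon^2 c_\epsilon^k\to 0$ is really a soft consequence of $|u_\epsilon|\le c_\epsilon$ and $L^q$-boundedness, not of the critical inequality itself. The paper's route is shorter to write and is the standard move in this blow-up literature, but yours makes the mechanism more transparent. One minor remark: you do not actually need $\|u_\epsilon\|_{k+1}\to 0$ from Lemma~\ref{L3}; mere boundedness of $\|u_\epsilon\|_{k+1}$ (immediate from $u_\epsilon\in\mathcal H$ and Sobolev) already suffices, since $c_\epsilon^{k-q}=c_\epsilon^{-1}\to 0$.
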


\begin{proof}
Using the $\mathrm{H\ddot{o}lder}$ inequality, (\ref{T-M1}) and (\ref{e-u}), we have
	\begin{equation*}
	  \begin{aligned}
r_{\epsilon}^{2}c_{\epsilon}^{k}&=\frac{\lambda _{\epsilon}}{\beta _{\epsilon}c_{\epsilon}^{2-k}e^{\alpha _{\epsilon}c_{\epsilon}^{2}}}
\le \frac{\int_{\Sigma}{u_{\epsilon}^{k}e^{\left( 1-\delta \right) \alpha _{\epsilon}u_{\epsilon}^{2}}dv_g}}{\beta _{\epsilon}e^{\left( 1-\delta \right) \alpha _{\epsilon}c_{\epsilon}^{2}}}
\\
&\le \frac{1}{\beta _{\epsilon}e^{\left( 1-\delta \right) \alpha _{\epsilon}c_{\epsilon}^{2}}}\left( \int_{\Sigma}{u_{\epsilon}^{kr}dv_g} \right) ^{\frac{1}{r}}\left( \int_{\Sigma}{e^{\left( 1-\delta \right) s\alpha _{\epsilon}u_{\epsilon}^{2}}dv_g} \right) ^{\frac{1}{s}}\\
&\le \frac{C}{\beta _{\epsilon}e^{\left( 1-\delta \right) \alpha _{\epsilon}c_{\epsilon}^{2}}}\left( \int_{\Sigma}{u_{\epsilon}^{kr}dv_g} \right) ^{\frac{1}{r}}
      \end{aligned}
	\end{equation*}
for any $0<\delta <1$, $1/r+1/s=1$ and some constant $C$ depending on $\delta$ and $s$.
Then the lemma follows from Lemma \ref{L3} immediately.
\end{proof}

We take an isothermal coordinate system $(U,\phi)$ near $x_0$ such that $\phi(x_0) = 0$. In such coordinates, the metric $g$ has the representation $g = e^{2f} (dx_1^2 +dx_2^2 )$ and $f$ is a smooth function with $f(0) = 0$. Denote $\bar{u}_\epsilon=u_\epsilon\circ\phi^{-1}$, $\bar{x}_\epsilon=\phi(x_\epsilon)$ and $U _ { \epsilon } = \{ x \in \mathbb { R } ^ { 2 } : \bar{x} _ { \epsilon } + r _ { \epsilon } x \in \phi\,(U ) \}$. Then $U_ { \epsilon }\rightarrow \mathbb{R}^2\,\mathrm{as}\, \epsilon \rightarrow 0$ from Lemma \ref{L4}.
Define two blowing up functions in $ U _ { \epsilon }$
\begin{equation}\label{psi}
\psi _ { \epsilon } ( x ) = \frac{\bar{u}_\epsilon \left(\bar{ x} _ { \epsilon } + r _ { \epsilon } x \right)} {c _ { \epsilon }}
\end{equation}
and
\begin{equation}\label{phi}
\varphi _{\epsilon}\left( x \right) =c_{\epsilon}\left( \bar{u}_{\epsilon}\left( \bar{x}_{\epsilon}+r_{\epsilon}x \right) -c_{\epsilon} \right).
\end{equation}
By (\ref{e-u}) and (\ref{r})-(\ref{phi}), a direct computation shows
	\begin{equation}\label{e-psi}
\begin{aligned}
   -\Delta _{\mathbb{R}^2}\psi _{\epsilon}=\left( c_{\epsilon}^{-2}\psi _{\epsilon}e^{\alpha _{\epsilon}\left( \psi _{\epsilon}+1 \right) \varphi _{\epsilon}}+c_{\epsilon}^{p-2}r_{\epsilon}^{2}\gamma _{\epsilon}\lVert u_{\epsilon}\lVert _{p}^{2-p}|\psi _{\epsilon}|^{p-2}\psi _{\epsilon}-\frac{r_{\epsilon}^{2}\mu _e}{c_{\epsilon}\lambda _{\epsilon}} \right) e^{2f\left( \bar{x}_{\epsilon}+r_{\epsilon}x \right)}
    \end{aligned}
	\end{equation}
and
	\begin{equation}\label{e-phi}
\begin{aligned}
-\Delta_{\mathbb{R}^2}\varphi _{\epsilon}=\left( \psi _{\epsilon}e^{\alpha _e\varphi _{\epsilon}\left( 1+\psi _{\epsilon} \right)}+c_{\epsilon}^{p}r_{\epsilon}^{2}\gamma _{\epsilon}\lVert u_{\epsilon}\lVert _{p}^{2-p}|\psi _{\epsilon}|^{p-2}\psi _{\epsilon}-\frac{c_{\epsilon}r_{\epsilon}^{2}\mu _{\epsilon}}{\lambda _{\epsilon}} \right) e^{2f\left( \bar{x}_{\epsilon}+r_{\epsilon}x \right)},
    \end{aligned}
	\end{equation}	
where $\Delta_ { \mathbb { R } ^ { 2 } }$ denotes the Laplace operator on $\mathbb { R } ^ { 2 }$.

Noticing (\ref{e-psi}), (\ref{e-phi}) and using the same argument as \cite{Yang2007}, we lead to
	\begin{equation}\label{5}
    \lim_{ \epsilon\rightarrow0}\psi _{\epsilon}= 1\,\,\mathrm{in}\, \, C_{loc}^{1}( \mathbb{R}^2)
	\end{equation}
and
	\begin{equation}\label{6}
    \lim_{ \epsilon\rightarrow0}\varphi _{\epsilon}= \varphi \, \, \mathrm{in}\, \, C_{loc}^{1}( \mathbb{R}^2),
	\end{equation}
where $\varphi$ satisfies $-\Delta_{\mathbb{R}^2} \varphi =e^{8\pi  \varphi}$ and
$\varphi \left( 0 \right) =0=\sup_{\mathbb{R}^2} \varphi$.
By the standard uniqueness result of the ordinary differential equation, there holds
    \begin{equation}\label{7}
    \varphi \left( x \right) =-\frac{1}{4\pi}\log ( 1+{\pi}|x|^{2}).
    \end{equation}
It follows that
	\begin{eqnarray}\label{8}
	\int_{\mathbb{R}^2}{e^{8\pi \left( 1+\beta \right) \varphi}|x|^{2\beta}dx=1}.
	\end{eqnarray}

Next we discuss the convergence behavior of $u_\epsilon$ away from $x_0$.
Denote $u_{\epsilon,\,\beta }=\min  \{\ \beta c_{\epsilon} , u_{\epsilon} \}  \in W^{1,2}\left(\Sigma,\ g \right) $ for any real number $0<\beta<1$. Following (\cite{Yang2007}, Lemma 4.5), we get
\begin{equation}\label{9}
\lim_{\epsilon \rightarrow 0}\left\| \nabla_g u _ { \epsilon ,\ \beta } \right\| _ { 2 } ^ { 2 }=\beta.
\end{equation}

	\begin{lemma}\label{L6}
 Letting $\lambda _{\epsilon}$ be defined by (\ref{e-u}), we obtain
\begin{equation*}
\begin{aligned}
\limsup _ { \epsilon \rightarrow 0 } J _ { 4 \pi - \epsilon } ^ { \alpha } \left( u _ { \epsilon } \right) = {\rm Area} (\Sigma)+\underset{\epsilon \rightarrow 0}{\lim}\,\frac{\lambda _{\epsilon}}{c_{\epsilon}^{2}}
  ={\rm Area}( \Sigma)+\lim_{R\rightarrow +\infty}\lim_{\epsilon \rightarrow 0}\int_{\phi ^{-1}\left( \mathbb{B}_{Rr_{\epsilon}}\left( \bar{x}_{\epsilon} \right) \right)}{e^{\alpha _{\epsilon}u_{\epsilon}^{2}}}\,dv_g.
  \end{aligned}
\end{equation*}
	\end{lemma}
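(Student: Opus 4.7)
The plan is to split $\Sigma$ into three pieces adapted to the blow-up picture: the rescaled neighbourhood $\Omega_{R,\epsilon}:=\phi^{-1}(\mathbb{B}_{Rr_\epsilon}(\bar x_\epsilon))$ of the concentration point, the ``subcritical'' region $\{u_\epsilon\le\beta c_\epsilon\}$ for a fixed $\beta\in(0,1)$, and the ``intermediate'' region $\{u_\epsilon>\beta c_\epsilon\}\setminus\Omega_{R,\epsilon}$. The two equalities of the lemma will follow by showing that the first piece carries exactly the concentration mass $\lambda_\epsilon/c_\epsilon^2$, that the second contributes ${\rm Area}(\Sigma)$, and that the third is dominated by $\lambda_\epsilon/(\beta^2c_\epsilon^2)$, which becomes negligible after letting $\beta\uparrow 1$.

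For the concentration piece I would change variables $x=\bar x_\epsilon+r_\epsilon y$ inside the isothermal chart. Writing $\bar u_\epsilon(\bar x_\epsilon+r_\epsilon y)=c_\epsilon+\varphi_\epsilon(y)/c_\epsilon$ and expanding $\alpha_\epsilon\bar u_\epsilon^2=\alpha_\epsilon c_\epsilon^2+2\alpha_\epsilon\varphi_\epsilon+\alpha_\epsilon\varphi_\epsilon^2/c_\epsilon^2$, the definition (\ref{r}) of $r_\epsilon$ produces the prefactor $\lambda_\epsilon/(\beta_\epsilon c_\epsilon^2)$, so that
$$\int_{\Omega_{R,\epsilon}}e^{\alpha_\epsilon u_\epsilon^2}\,dv_g=\frac{\lambda_\epsilon}{\beta_\epsilon c_\epsilon^2}\int_{\mathbb{B}_R}e^{\alpha_\epsilon(2\varphi_\epsilon+\varphi_\epsilon^2/c_\epsilon^2)}e^{2f(\bar x_\epsilon+r_\epsilon y)}\,dy.$$
Using (\ref{5})--(\ref{7}), Lemma \ref{L4}, $\beta_\epsilon\to1$, $\alpha_\epsilon\to 4\pi$ and $f(\bar x_\epsilon+r_\epsilon y)\to f(0)=0$, the integrand tends to $e^{8\pi\varphi}$ in $C_{\rm loc}^0$, and sending $\epsilon\to0$ then $R\to\infty$ together with (\ref{8}) (taken at $\beta=0$, so $\int_{\mathbb{R}^2}e^{8\pi\varphi}dy=1$) yields the second equality of the lemma.

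For the subcritical region I exploit the truncation $u_{\epsilon,\beta}=\min\{\beta c_\epsilon,u_\epsilon\}$. Setting $v_\epsilon:=u_{\epsilon,\beta}-\overline{u_{\epsilon,\beta}}$ gives $\int_\Sigma v_\epsilon\,dv_g=0$, $\overline{u_{\epsilon,\beta}}\to 0$ (since $u_\epsilon\to 0$ in $L^1$ by Lemma \ref{L3}), and $\|\nabla_g v_\epsilon\|_2^2=\|\nabla_g u_{\epsilon,\beta}\|_2^2\to\beta<1$ by (\ref{9}). The elementary inequality $u_{\epsilon,\beta}^2\le(1+\delta)v_\epsilon^2+C_\delta\overline{u_{\epsilon,\beta}}^2$ combined with Fontana's inequality (\ref{T-M}) applied to $v_\epsilon/\|\nabla_g v_\epsilon\|_2\in\mathcal{H}$ then shows that $e^{\alpha_\epsilon u_{\epsilon,\beta}^2}$ is bounded in $L^q(\Sigma,g)$ for some $q=q(\beta,\delta)>1$. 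Since a subsequence of $u_\epsilon$ converges to $0$ almost everywhere, so does $u_{\epsilon,\beta}$ outside a set whose measure tends to $0$, and Vitali's convergence theorem delivers $\int_\Sigma e^{\alpha_\epsilon u_{\epsilon,\beta}^2}\,dv_g\to{\rm Area}(\Sigma)$. Because $e^{\alpha_\epsilon u_\epsilon^2}=e^{\alpha_\epsilon u_{\epsilon,\beta}^2}$ on $\{u_\epsilon\le\beta c_\epsilon\}$, and the complementary set has measure $\to 0$, the subcritical contribution tends to ${\rm Area}(\Sigma)$.

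The intermediate region is handled by the pointwise bound $1\le u_\epsilon^2/(\beta^2c_\epsilon^2)$ on $\{u_\epsilon>\beta c_\epsilon\}$, giving $\int_{\{u_\epsilon>\beta c_\epsilon\}}e^{\alpha_\epsilon u_\epsilon^2}\,dv_g\le\lambda_\epsilon/(\beta^2c_\epsilon^2)$. Assembling the three estimates yields $\limsup J_{4\pi-\epsilon}^{\alpha}(u_\epsilon)\le{\rm Area}(\Sigma)+\beta^{-2}\limsup\lambda_\epsilon/c_\epsilon^2$, and sending $\beta\uparrow 1$ removes the extra factor. The matching lower bound comes from Fatou applied on $\Sigma\setminus\Omega_{R,\epsilon}$ (whose indicator function converges to $1$ a.e. as $\epsilon\to 0$, then $R\to\infty$) combined with the concentration asymptotics already computed on $\Omega_{R,\epsilon}$. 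The main technical obstacle is the uniform $L^q$-bound with $q>1$ in the truncation step: this is where the zero-mean normalisation in $\mathcal{H}$, the convergence (\ref{9}), and the sharp form of (\ref{T-M}) are all indispensable, while the remaining arguments are essentially bookkeeping.
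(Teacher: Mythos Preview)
Your proposal is correct and follows essentially the same route as the paper: the same truncation $u_{\epsilon,\beta}$, the same $L^q$-bound for $e^{\alpha_\epsilon u_{\epsilon,\beta}^2}$ via (\ref{T-M}) and (\ref{9}), the same crude estimate $\int_{\{u_\epsilon>\beta c_\epsilon\}}e^{\alpha_\epsilon u_\epsilon^2}\le\lambda_\epsilon/(\beta c_\epsilon)^2$, and the same change of variables on the bubble using (\ref{r}) and (\ref{5})--(\ref{8}). The only cosmetic differences are that the paper replaces your Vitali step by the inequality $e^t-1\le te^t$ followed by H\"older (so the subcritical piece carries an explicit factor $\|u_\epsilon\|_{2s}\to 0$), and obtains the lower bound in one line from the pointwise inequality $e^{\alpha_\epsilon u_\epsilon^2}-1\ge (u_\epsilon/c_\epsilon)^2(e^{\alpha_\epsilon u_\epsilon^2}-1)$ rather than by combining Fatou on $\Sigma\setminus\Omega_{R,\epsilon}$ with the concentration computation.
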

	
	\begin{proof}
Recalling (\ref{e-u}) and (\ref{9}), for any real number $0<\beta<1$, one gets
	\begin{equation*}
\begin{aligned}
J _ { 4 \pi - \epsilon } ^ { \alpha } \left( u _ { \epsilon } \right)-{\rm Area}( \Sigma)
&=
\int_{\left\{ x\in \Sigma :\, u_{\epsilon}\le \beta c_{\epsilon} \right\}}{( e^{\alpha _{\epsilon}u_{\epsilon}^{2}}-1 )\, dv_g}+\int_{\left\{ x\in \Sigma :\, u_{\epsilon}>\beta c_{\epsilon} \right\}}{( e^{\alpha _{\epsilon}u_{\epsilon}^{2}}-1 )\, dv_g}
\\
&\le \int_{\Sigma}{( e^{\alpha _{\epsilon}u_{\epsilon ,\,\beta}^{2}}-1 )\, dv_g}+\frac{u_{\epsilon}^{2}}{\beta ^2c_{\epsilon}^{2}}\int_{\left\{ x\in \Sigma :\,u_{\epsilon}>\beta c_{\epsilon} \right\}}{e^{\alpha _{\epsilon}u_{\epsilon}^{2}}\,dv_g}
\\
&\le \int_{\Sigma}e^{\alpha _{\epsilon}u_{\epsilon ,\,\beta}^{2}}\alpha _{\epsilon}u_{\epsilon}^{2}\,dv_g+\frac{\lambda _{\epsilon}}{\beta ^2c_{\epsilon}^{2}}
\\
&\le \left( \int_{\Sigma}e^{r\alpha _{\epsilon}u_{\epsilon ,\,\beta}^{2}}dv_g \right) ^{1/r}\left( \int_{\Sigma} \alpha _{\epsilon}^su_{\epsilon}^{2s}\,dv_g \right) ^{1/s}+\frac{\lambda _{\epsilon}}{\beta ^2c_{\epsilon}^{2}}.
\end{aligned}
	\end{equation*}
By (\ref{T-M1}) and (\ref{9}), $e^{\alpha _{\epsilon}u_{\epsilon ,\,\beta}^{2}}$ is bounded in $L^r\left( \Sigma,\ g \right)$ for some $r>1$.
Then letting $\epsilon \rightarrow 0$ first and then $\beta \rightarrow 1$, we obtain
	$$\limsup _ { \epsilon \rightarrow 0 } J _ { 4 \pi - \epsilon } ^ { \alpha } \left( u _ { \epsilon } \right)-{\rm Area}( \Sigma)\leq \underset{\epsilon \rightarrow 0}{\limsup}\,\frac{\lambda _{\epsilon}}{c_{\epsilon}^{2}}.$$
According to (\ref{e-u}), we have $$
\limsup _ { \epsilon \rightarrow 0 }J_{4\pi -\epsilon}^{\alpha}\left( u_{\epsilon} \right) -{\rm Area}\left( \Sigma \right) \ge \liminf _ { \epsilon \rightarrow 0 }\int_{\Sigma}{\frac{u_{\epsilon}^{2}}{c_{\epsilon}^{2}}e^{\alpha _{\epsilon}u_{\epsilon}^{2}}}\,dv_g-\lim_ { \epsilon \rightarrow 0 }\frac{1}{c_{\epsilon}^{2}}\int_{\Sigma}{u_{\epsilon}^{2}}\,dv_g=\liminf _ {\epsilon\rightarrow0}\frac{\lambda_{\epsilon}}{c_{\epsilon}^{2}}.
$$
Then there holds $$\limsup _ { \epsilon \rightarrow 0 } J _ { 4 \pi - \epsilon } ^ { \alpha } \left( u _ { \epsilon } \right) = {\rm Area}( \Sigma)+\underset{\epsilon \rightarrow 0}{\lim}\,\frac{\lambda _{\epsilon}}{c_{\epsilon}^{2}}.$$

Applying (\ref{e-u}) and (\ref{r})-(\ref{phi}), one has
\begin{equation*}
  \begin{aligned}
\int_{\phi ^{-1}( \mathbb{B}_{Rr_{\epsilon}}( \bar{x}_{\epsilon} ) )}{e^{\alpha _{\epsilon}u_{\epsilon}^{2}}}dv_g
&=\int_{\mathbb{B}_{Rr_{\epsilon}}\left( \bar{x}_{\epsilon} \right)}{e^{\alpha _{\epsilon}u_{\epsilon}^{2}\left( x \right)}e^{2f\left( x \right)}}dx
\\
&=\int_{\mathbb{B}_R\left( 0 \right)}{r_{\epsilon}^{2}e^{\alpha _{\epsilon}c_{\epsilon}^{2}\left( x \right)}e^{\alpha _{\epsilon}\left( \psi _{\epsilon}\left( x \right) +1 \right) \varphi _{\epsilon}\left( x \right)}e^{2f\left( \bar{x}_{\epsilon}+r_{\epsilon}x \right)}}dx
\\
&=\int_{\mathbb{B}_R\left( 0 \right)}\frac{\lambda _{\epsilon}}{\beta _{\epsilon}c_{\epsilon}^{2}}{e^{\alpha _{\epsilon}\left( \psi _{\epsilon}\left( x \right) +1 \right)\, \varphi _{\epsilon}\left( x \right)}e^{2f\left( \bar{x}_{\epsilon}+r_{\epsilon}x \right)}}dx.
  \end{aligned}
\end{equation*}
Combining (\ref{5})-(\ref{8}), we get
\begin{equation*}
\lim_{R\rightarrow +\infty}\lim_{\epsilon \rightarrow 0}\int_{\phi ^{-1}( \mathbb{B}_{Rr_{\epsilon}}( \bar{x}_{\epsilon} ) )}{e^{\alpha _{\epsilon}u_{\epsilon}^{2}}}dv_g=\lim_{\epsilon \rightarrow 0}\frac{\lambda _{\epsilon}}{c_{\epsilon}^{2}}.
\end{equation*}
Summarizing, we lead to the lemma.
    \end{proof}

Next we consider the properties of $c_{\epsilon}u_{\epsilon}$.
Using the similar idea of (\cite{Yang2007}, Lemma 4.7), we get
\begin{equation}\label{10}
 \frac { \beta _ { \epsilon } } { \lambda _ { \epsilon } } c _ { \epsilon } u _ { \epsilon } e ^ { \alpha _ { \epsilon } u _ { \epsilon } ^ { 2 } } dv_g \rightharpoonup \delta_{x_0}.
\end{equation}
We also need the following result belonging to \cite{Yang2007}:
\begin{lemma}\label{L5}
  Assume $u \in C ^ { \infty } ( \Sigma)$ is a solution of $\Delta_g u = f ( x )$ in $(\Sigma,\ g)$ and satisfies $ { \| u \| _ { 1 } \leq c _ { 0 } \| f \| _ { 1 } }$. Then for any $1 < q < 2 $, there holds $\| \nabla_g u \| _ { q } \leq C \left( q , c _ { 0 } , \Sigma,\ g \right) \| f \| _ { 1 }.$
\end{lemma}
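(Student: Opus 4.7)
The strategy is to combine the Green's function representation of $u$ with Jensen's inequality, exploiting the fact that $d_g(\cdot, y)^{-q}$ is integrable on the $2$-dimensional surface $\Sigma$ precisely when $q < 2$.

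Since $(\Sigma, g)$ is a closed Riemann surface, the Laplace--Beltrami operator admits a Green's function $G(x, y)$ satisfying $\Delta_{g, y} G(x, y) = \delta_x - 1/\mathrm{Area}(\Sigma)$, with the near-diagonal asymptotics $G(x, y) = -\tfrac{1}{2\pi}\log d_g(x, y) + \omega(x, y)$ where $\omega$ is smooth, and the consequent pointwise bound $|\nabla_{g, x} G(x, y)| \leq C\, d_g(x, y)^{-1}$. Because $\int_\Sigma f\, dv_g = \int_\Sigma \Delta_g u\, dv_g = 0$, one has the representation
$$u(x) = \int_\Sigma G(x, y)\, f(y)\, dv_g(y) + \bar u,$$
for some constant $\bar u$; the hypothesis $\|u\|_1 \leq c_0 \|f\|_1$ ensures that $\bar u$ is under control, though it will play no further role. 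Differentiating in $x$ eliminates $\bar u$ and yields
$$\nabla_g u(x) = \int_\Sigma \nabla_{g, x} G(x, y)\, f(y)\, dv_g(y).$$

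Set $K(x, y) = |\nabla_{g, x} G(x, y)|$. Jensen's inequality applied with the probability measure $|f|\, dv_g / \|f\|_1$ gives
$$|\nabla_g u(x)|^q \leq \|f\|_1^{q-1} \int_\Sigma K(x, y)^q\, |f(y)|\, dv_g(y),$$
and integrating in $x$ followed by Fubini yields
$$\|\nabla_g u\|_q^q \leq \|f\|_1^{q-1} \int_\Sigma |f(y)| \left( \int_\Sigma K(x, y)^q\, dv_g(x) \right) dv_g(y).$$
The inner integral is majorized by $C\int_\Sigma d_g(x, y)^{-q}\, dv_g(x)$, which in geodesic polar coordinates around $y$ is comparable to $\int_0^{\mathrm{diam}(\Sigma)} r^{1-q}\, dr$. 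This is finite and uniform in $y$ precisely because $q < 2$. Collecting the estimates produces $\|\nabla_g u\|_q^q \leq C(q, \Sigma, g)\, \|f\|_1^q$, which is the desired bound.

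The only non-routine ingredients are the existence of $G$ on a closed Riemann surface and the pointwise gradient estimate $|\nabla_{g, x} G(x, y)| \leq C\, d_g(x, y)^{-1}$; both are classical and follow from the local expansion of $G$ near the diagonal. Once these are granted, the argument reduces to Jensen's inequality and Fubini's theorem. The main obstacle is really just the borderline nature of the $L^1 \to L^q$ estimate at $q = 2$; the range $q < 2$ is dictated precisely by the integrability of the Green's gradient kernel on a two-dimensional domain, so the hypothesis $q < 2$ is sharp for this method. The constant $c_0$ appears in the statement only because the author writes the hypothesis conservatively; the resulting constant in the conclusion is in fact independent of $c_0$.
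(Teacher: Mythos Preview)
Your proof is correct. The paper does not actually prove this lemma; it is quoted verbatim from Yang \cite{Yang2007} with the remark ``We also need the following result belonging to \cite{Yang2007}'' and no argument is given here. Your Green's-function approach is the standard potential-theoretic proof: write $u$ via the Green kernel (using $\int_\Sigma f\,dv_g=0$), differentiate to kill the constant, and bound the resulting integral operator $L^1\to L^q$ by Jensen/Fubini together with the integrability of $d_g(x,y)^{-q}$ for $q<2$. All steps are routine once the near-diagonal estimate $|\nabla_{g,x}G(x,y)|\le C\,d_g(x,y)^{-1}$ is granted, and that follows from the local expansion of $G$.

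Your closing remark is also correct and worth recording: since $\bar u$ is annihilated by $\nabla_g$, the hypothesis $\|u\|_1\le c_0\|f\|_1$ never enters the gradient estimate, so the constant in the conclusion is in fact independent of $c_0$. The dependence on $c_0$ in the stated constant is vestigial for this particular inequality (it would matter only if one also wanted a full $W^{1,q}$ bound including the $L^q$ norm of $u$ itself).
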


\begin{lemma}\label{LG}
  For any $1 < q < 2$, $c _ { \epsilon } u _ { \epsilon }$ is bounded in $W ^ { 1 , q } ( \Sigma,\ g)$.
  Moreover, there holds
	\begin{equation*}
	\left\{ \begin{aligned}
	&c_{\epsilon}u_{\epsilon}\rightharpoonup G\, \, \mathrm{weakly\, \, in\,}\, W^{1,q}\left( \Sigma,\ g \right), \,\forall 1<q<2, \\
	&c_{\epsilon}u_{\epsilon}\rightarrow G\, \, \mathrm{strongly\, \, in  \,}\, L^s\left( \Sigma,\ g \right), \,\forall 1<s<\frac{2q}{2-q}, \\
	&c_{\epsilon}u_{\epsilon}\rightarrow G\ \mathrm{in\,}\, {C_{loc}^{1}\left( \Sigma\backslash \left\{ x_0 \right\} \right) },
	\end{aligned} \right.
	\end{equation*}
where $G$ is a Green function satisfying $\int_{\Sigma}{Gdv_g}=0$ and
\begin{equation}\label{e-G}
\Delta_g G=\delta _{x_0}+\alpha \lVert G \rVert _{p}^{2-p}\left| G \right|^{p-2}G-\frac{1}{{\rm Area}\left( \Sigma \right)}\left( 1+\alpha \lVert G \rVert _{p}^{2-p}\int_{\Sigma}{\left| G \right|}^{p-2}G\,dv_g \right).
\end{equation}

\end{lemma}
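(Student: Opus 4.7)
The strategy is to multiply the Euler-Lagrange equation (\ref{e-u}) by $c_\epsilon$ to obtain
\begin{equation*}
\Delta_g(c_\epsilon u_\epsilon) = h_\epsilon := \frac{\beta_\epsilon}{\lambda_\epsilon}\,c_\epsilon u_\epsilon e^{\alpha_\epsilon u_\epsilon^2} + \gamma_\epsilon\|u_\epsilon\|_p^{2-p}\,c_\epsilon|u_\epsilon|^{p-2}u_\epsilon - \frac{c_\epsilon\mu_\epsilon}{\lambda_\epsilon},
\end{equation*}
and then to apply Lemma \ref{L5}. Two inputs are needed: a uniform $L^1$ bound on $h_\epsilon$, and the hypothesis $\|c_\epsilon u_\epsilon\|_1 \leq c_0\|h_\epsilon\|_1$ of Lemma \ref{L5}. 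The latter is automatic once the former is known: since $\int_\Sigma c_\epsilon u_\epsilon\, dv_g = 0$, Green's representation on the closed surface $(\Sigma,g)$ yields $\|c_\epsilon u_\epsilon\|_1 \leq C\|h_\epsilon\|_1$, the constant $C$ depending only on the integral of the Green kernel (finite thanks to its logarithmic singularity in dimension two).

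For the $L^1$ bound on $h_\epsilon$ itself, I would split $\Sigma$ as $\phi^{-1}(\mathbb{B}_{Rr_\epsilon}(\bar x_\epsilon))$ and its complement. On the small ball, the change of variables (\ref{psi})--(\ref{phi}) together with (\ref{r}) transforms the $L^1$ mass of the first term of $h_\epsilon$ into $\int_{\mathbb{B}_R}\psi_\epsilon\,e^{\alpha_\epsilon\varphi_\epsilon(1+\psi_\epsilon)}\,e^{2f(\bar x_\epsilon+r_\epsilon x)}\,dx$, which by (\ref{5})--(\ref{8}) tends to $\int_{\mathbb{R}^2} e^{8\pi\varphi}\,dx = 1$ as $\epsilon\to 0$ and $R\to\infty$. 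On the complement, the $L^s$ bound ($s>1$) for $e^{\alpha_\epsilon u_\epsilon^2}$ away from the concentration point, as obtained in the proof of Lemma \ref{L3}, combined with H\"older's inequality disposes of the remaining contribution. The second term of $h_\epsilon$ is controlled by H\"older with $\||u_\epsilon|^{p-1}\|_1 \leq \|u_\epsilon\|_p^{p-1}\,{\rm Area}(\Sigma)^{1/p}$; the third (constant) term is controlled via the identity $\int_\Sigma h_\epsilon\,dv_g = 0$, which forces $c_\epsilon\mu_\epsilon/\lambda_\epsilon$ to be bounded once the first two terms are. Lemma \ref{L5} then produces $\|\nabla_g(c_\epsilon u_\epsilon)\|_q \leq C(q)$ uniformly in $\epsilon$ for every $1<q<2$, and together with the compact embedding $W^{1,q}(\Sigma,g)\hookrightarrow L^s(\Sigma,g)$ for $s<2q/(2-q)$ delivers the first two convergences in the statement.

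To identify the limit with the Green function, I would pass to distributional limits term by term in $h_\epsilon$: the first term tends to $\delta_{x_0}$ by (\ref{10}); the algebraic rewriting
\begin{equation*}
\gamma_\epsilon\|u_\epsilon\|_p^{2-p}\,c_\epsilon|u_\epsilon|^{p-2}u_\epsilon = \gamma_\epsilon\bigl(c_\epsilon\|u_\epsilon\|_p\bigr)^{2-p}\,|c_\epsilon u_\epsilon|^{p-2}(c_\epsilon u_\epsilon),
\end{equation*}
together with $\gamma_\epsilon\to\alpha$ and the strong $L^s$-convergence $c_\epsilon u_\epsilon\to G$, yields the nonlinear term $\alpha\|G\|_p^{2-p}|G|^{p-2}G$ in (\ref{e-G}); the constant term is then pinned down by integrating (\ref{e-G}) over $\Sigma$. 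Standard interior elliptic estimates applied on any compact subset of $\Sigma\setminus\{x_0\}$, where $h_\epsilon$ is locally smooth and does not concentrate, upgrade convergence to $C^1_{\rm loc}(\Sigma\setminus\{x_0\})$. The principal obstacle is the $L^1$ estimate of the first term of $h_\epsilon$ in the ``neck'' region $\phi^{-1}(\mathbb{B}_\delta)\setminus\phi^{-1}(\mathbb{B}_{Rr_\epsilon}(\bar x_\epsilon))$, where $u_\epsilon$ has left the rescaled profile but is not yet negligible, together with the mild bootstrap on $c_\epsilon\|u_\epsilon\|_p$ required to close the estimate on the second term.
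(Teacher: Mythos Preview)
Your outline is structurally correct and close to the paper's argument, but the phrase ``mild bootstrap on $c_\epsilon\|u_\epsilon\|_p$'' hides what is in fact the heart of the proof. The $L^1$ norm of your second term is $\gamma_\epsilon\,c_\epsilon\|u_\epsilon\|_p\cdot{\rm Area}(\Sigma)^{1/p}$ up to the H\"older step you wrote, so the whole application of Lemma~\ref{L5} stands or falls on a uniform bound for $\|c_\epsilon u_\epsilon\|_p$. You do not indicate any mechanism that would produce this bound, and none of the ingredients you list (the bubble decomposition, $\|u_\epsilon\|_p\to 0$, or the zero-mean identity $\int h_\epsilon=0$) yields it; a priori one only knows $\|u_\epsilon\|_p\to 0$ with no rate. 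This is exactly where the hypothesis $\alpha<\lambda_p$ enters, and it cannot be bypassed.

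The paper closes this gap by a contradiction argument: assuming $\|c_\epsilon u_\epsilon\|_p\to\infty$, set $w_\epsilon=c_\epsilon u_\epsilon/\|c_\epsilon u_\epsilon\|_p$, so that $\|w_\epsilon\|_p=1$ and $\int_\Sigma w_\epsilon\,dv_g=0$. Dividing the equation for $c_\epsilon u_\epsilon$ by $\|c_\epsilon u_\epsilon\|_p$ gives $\Delta_g w_\epsilon$ bounded in $L^1(\Sigma,g)$, because the exponential term now carries an extra factor $\|c_\epsilon u_\epsilon\|_p^{-1}\to 0$ (its $L^1$ mass is controlled by (\ref{10})), the nonlinear term becomes $\gamma_\epsilon|w_\epsilon|^{p-2}w_\epsilon$ with $\|w_\epsilon\|_p=1$, and the constant term is handled as in your third step. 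Lemma~\ref{L5} then gives $w_\epsilon\to w$ weakly in $W^{1,q}$ and strongly in $L^s$, with $\|w\|_p=1$, $\int_\Sigma w\,dv_g=0$, and the limiting equation $\Delta_g w=\alpha|w|^{p-2}w-\alpha\,{\rm Area}(\Sigma)^{-1}\int_\Sigma|w|^{p-2}w\,dv_g$. Testing against $w$ yields $\|\nabla_g w\|_2^2=\alpha\|w\|_p^p=\alpha$, whence $\lambda_p\le\|\nabla_g w\|_2^2/\|w\|_p^2=\alpha$, contradicting $\alpha<\lambda_p$. Once $\|c_\epsilon u_\epsilon\|_p$ is bounded, the rest of your plan goes through essentially as written; note also that for the $L^1$ bound on the first term the paper simply invokes (\ref{10}) rather than your bubble-plus-neck decomposition, which sidesteps the neck obstacle you flag.
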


\begin{proof}
 By (\ref{e-u}), there holds
 	\begin{equation}\label{11}
\Delta_g \left( c_{\epsilon}u_{\epsilon} \right) =\frac{\beta _{\epsilon}}{\lambda _{\epsilon}}c_{\epsilon}u_{\epsilon}e^{\alpha _{\epsilon}u_{\epsilon}^{2}}+\gamma _{\epsilon}\lVert c_{\epsilon}u_{\epsilon} \rVert _{p}^{2-p}\left| c_{\epsilon}u_{\epsilon} \right|^{p-2}c_{\epsilon}u_{\epsilon}-c_{\epsilon}\frac{\mu _{\epsilon}}{\lambda _{\epsilon}}.
	\end{equation}
According to (\ref{e-u}), (\ref{10}) and $\mathrm{H\ddot{o}lder}$ inequality, we have
\begin{equation}\label{12}
\begin{aligned}
\left| \frac{c_{\epsilon}\mu _{\epsilon}}{\lambda _{\epsilon}} \right|&=\frac{1}{{\rm Area}\left( \Sigma \right)}\left( \int_{\Sigma}{\frac{c_{\epsilon}\beta _{\epsilon}}{\lambda _{\epsilon}}}u_{\epsilon}e^{\alpha _{\epsilon}u_{\epsilon}^{2}}dv_g+c_{\epsilon}\gamma _{\epsilon}\lVert u_{\epsilon} \rVert _{p}^{2-p}\int_{\Sigma}{|}u_{\epsilon}|^{p-2}u_{\epsilon}\,dv_g \right)
\\
&\le \frac{1}{{\rm Area}\left( \Sigma \right)}\left( 1+o_{\epsilon}\left( 1 \right) +\gamma _{\epsilon}\lVert c_{\epsilon}u_{\epsilon} \rVert _{p}^{2-p}\int_{\Sigma}{|c_{\epsilon}}u_{\epsilon}|^{p-2}c_{\epsilon}u_{\epsilon}\,dv_g \right)
\\
&\le \frac{1}{{\rm Area}\left( \Sigma \right)}\left( 1+o_{\epsilon}\left( 1 \right) +\lVert c_{\epsilon}u_{\epsilon} \rVert _p\gamma _{\epsilon}{\rm Area}\left( \Sigma \right) ^{\frac{1}{p}} \right).
\end{aligned}
\end{equation}

We claim that $\left\| c _ { \epsilon } u _ { \epsilon } \right\| _ { p }$ is bounded. Suppose not, we can assume  $\left\| c _ { \epsilon } u _ { \epsilon } \right\| _ { p } \rightarrow + \infty$ as $\epsilon \rightarrow 0$. Let $w _ { \epsilon } = c _ { \epsilon } u _ { \epsilon } / \left\| c _ { \epsilon } u _ { \epsilon } \right\| _ { p }\in W^{1,2}( \Sigma,\ g)$. Then $\left\| w _ { \epsilon } \right\| _ { p } = 1$, $\int _ { \Sigma } w _ { \epsilon } dv_g = 0$ and
\begin{equation}\label{13}
  \Delta_g w_{\epsilon}=\frac{1}{\lVert c_{\epsilon}u_{\epsilon} \rVert _p}\frac{\beta _{\epsilon}}{\lambda _{\epsilon}}c_{\epsilon}u_{\epsilon}e^{\alpha _{\epsilon}u_{\epsilon}^{2}}+\gamma _{\epsilon}\left| w_{\epsilon} \right|^{p-2}w_{\epsilon}-\frac{1}{\lVert c_{\epsilon}u_{\epsilon} \rVert _p}\frac{c_{\epsilon}\mu _{\epsilon}}{\lambda _{\epsilon}}.
\end{equation}
From (\ref{12}), one gets
$$
\left| \frac{1}{\lVert c_{\epsilon}u_{\epsilon} \rVert _p}\frac{c_{\epsilon}\mu _{\epsilon}}{\lambda _{\epsilon}} \right|\le \alpha {\rm Area}\left( \Sigma \right) ^{\frac{1-p}{p}}+o_{\epsilon}\left( 1 \right).
$$
Then we obtain $\Delta_g w _ { \epsilon }$ is bounded in $L^1 ( \Sigma,\ g)$.
According to Lemma \ref{L5}, we have $w_\epsilon$ is bounded in $W^{1,q}( \Sigma,\ g)$ for any $1<q<2$. Then there exists $w$ satisfying $w _ { \epsilon } \rightharpoonup w$  weakly in $W^{ 1,\,q}( \Sigma,\ g)$ and $w _ { \epsilon } \rightarrow w$ strongly in $L^s( \Sigma,\ g)$ for any $1<s<{2q}/(2-q)$.
Testing (\ref{13}) with $\phi \in C^1 ( \Sigma)$, letting ${ \epsilon } \rightarrow 0$ and combining (\ref{10}), we get
\begin{equation}\label{14}
\int_{\Sigma}\nabla_g\phi \nabla_g w\,dv_g=\alpha \int_{\Sigma}{\phi}\left| w \right|^{p-2}w\,dv_g- \frac{\alpha\int_{\Sigma}{\left| w \right|}^{p-2}w\,dv_g}{{\rm Area}\left( \Sigma \right)}\int_{\Sigma}{\phi}\,dv_g.
\end{equation}
Since $0\leq\alpha < \lambda_{ p}$ and (\ref{14}), one derives that $w \equiv 0$, which contradicts the fact of $\| w \| _ { p } = 1$. Hence $\left\| c _ { \epsilon } u _ { \epsilon } \right\| _ { p }$ is bounded.

According to this and (\ref{12}), we have $\Delta_g \left( c _ { \epsilon } u _ { \epsilon } \right)$ is bounded in $L^1( \Sigma,\ g)$.
Again by Lemma \ref{L5}, there holds $c _ { \epsilon } u _ { \epsilon }$ is bounded in $W ^ { 1 , q } ( \Sigma,\ g)$ for any $1 < q < 2$.
It is easy to get $c_{\epsilon}u_{\epsilon}\rightharpoonup G\, \, \mathrm{weakly\, \, in\,}\, W^{1,q}\left( \Sigma,\ g \right)$ for any $1<q<2$ and $c_{\epsilon}u_{\epsilon}\rightarrow G\, \, \mathrm{strongly\, \, in  \,}\, L^s\left( \Sigma,\ g \right)$ for any $1<s<{2q}/{(2-q)}$.

We choose a cut-off function $\rho$ in $C^{\infty}\left( \Sigma \right)$, which is equal to $0$ in $B_\delta(x_0)$ and equal to $1$ in $\Sigma\backslash B_{2\delta}(x_0)$ such that
$\lim_{\epsilon\rightarrow0}{\lVert \nabla_g \left( \rho u_{\epsilon} \right) \rVert _{2}^{2}}=0.$
Hence there holds
\begin{equation*}
  \int_{\Sigma \backslash B_{2\delta}\left( x_0 \right)}{e^{s\alpha _{\epsilon}u_{\epsilon}^{2}}}dx\leqslant \int_{\Sigma \backslash B_{2\delta}\left( x_0 \right)}{e^{s\alpha _{\epsilon}\lVert \nabla_g \left( \rho u_{\epsilon} \right) \rVert _{2}^{2}\frac{\rho ^2u_{\epsilon}^{2}}{\lVert \nabla_g \left( \rho u_{\epsilon} \right) \rVert _{2}^{2}}}}dx.
\end{equation*}
From the Trudinger-Moser inequality (\ref{T-M}),  ${e^{\alpha _{\epsilon}u_{\epsilon}^{2}}}$ is bounded in $L^s\left( \Sigma,\ g \right) $ for some $s>1$.
Applying the elliptic estimate and the compact embedding theorem to (\ref{11}), we obtain $c_{\epsilon}u_{\epsilon}\rightarrow G$ in $ C_{loc}^{1}\left( {\Sigma }\backslash \left\{ x_0 \right\} \right).$
Testing (\ref{11}) by $\phi \in C^1\left( \Sigma \right)$, we lead to (\ref{e-G}) follows.
	\end{proof}

Applying the elliptic estimate, we can decompose $G$ as the form:
\begin{equation}\label{G}
  G=-\frac{1}{2\pi}\log \left| r \right|+A_{x_0}+\sigma(x),
\end{equation}
where $r=dist(x,x_0)$, $A_{x_0}$ is a constant and $\sigma(x)\in C^{\infty}( \Sigma)$ with $\sigma(x_0)=0$.

\subsection{Upper bound estimate}
	\begin{lemma}\label{L8}
Under the hypotheses $c_{\epsilon}\rightarrow +\infty$ and $x _ { \epsilon } \rightarrow x _ { 0 } \in \Sigma$, there holds
	\begin{equation*}\label{15}
	\sup _ { u \in \mathcal { H } } J _ { 4 \pi } ^ { \alpha } ( u ) \leq {\rm Area}( \Sigma) + \pi e ^ { 1 + 4 \pi A _ { x_0} }.
    \end{equation*}
	\end{lemma}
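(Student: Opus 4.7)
The plan is to combine Lemma~\ref{L6} with a capacity (harmonic-comparison) estimate on a shrinking annulus around $x_0$. By Lemma~\ref{L6} together with (\ref{3}), the identity
\begin{equation*}
\sup_{u\in\mathcal{H}} J^\alpha_{4\pi}(u) \;=\; \mathrm{Area}(\Sigma) + \lim_{\epsilon\to 0}\frac{\lambda_\epsilon}{c_\epsilon^2}
\end{equation*}
reduces the lemma to showing $\limsup_{\epsilon\to 0}\lambda_\epsilon/c_\epsilon^2\leq \pi e^{1+4\pi A_{x_0}}$. Using the definition~(\ref{r}) we rewrite $\lambda_\epsilon/c_\epsilon^2=\beta_\epsilon\, r_\epsilon^2 e^{\alpha_\epsilon c_\epsilon^2}$; since $u_0=0$ forces $\|u_\epsilon\|_p\to 0$ and hence $\beta_\epsilon\to 1$, it suffices to bound $r_\epsilon^2 e^{\alpha_\epsilon c_\epsilon^2}$ from above.

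Next I would work in the isothermal chart $\phi$ around $x_0$, exploiting conformal invariance of the Dirichlet energy in dimension two. On the annular region $A_{R,\delta,\epsilon}:=\phi^{-1}(\mathbb{B}_\delta)\setminus\phi^{-1}(\mathbb{B}_{Rr_\epsilon}(\bar x_\epsilon))$, the standard capacity inequality
\begin{equation*}
\int_{A_{R,\delta,\epsilon}} |\nabla_g u_\epsilon|^2\, dv_g \;\geq\; \frac{2\pi\,(a_\epsilon - b_\epsilon)^2}{\log(\delta/(Rr_\epsilon))}
\end{equation*}
holds with $a_\epsilon:=\inf_{\partial\mathbb{B}_{Rr_\epsilon}(\bar x_\epsilon)}\bar u_\epsilon$ and $b_\epsilon:=\sup_{\partial\mathbb{B}_\delta}\bar u_\epsilon$ (obtained by truncating $\bar u_\epsilon$ between $b_\epsilon$ and $a_\epsilon$ and comparing with the radial logarithmic extension). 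From the blow-up limits (\ref{5})--(\ref{7}), $a_\epsilon = c_\epsilon - (4\pi c_\epsilon)^{-1}\log(1+\pi R^2) + o(1/c_\epsilon)$, and from Lemma~\ref{LG} together with the expansion (\ref{G}), $b_\epsilon = c_\epsilon^{-1}\bigl(-(2\pi)^{-1}\log\delta + A_{x_0}\bigr) + o_\epsilon(1/c_\epsilon) + o_\delta(1/c_\epsilon)$, so that $2\pi(a_\epsilon-b_\epsilon)^2 = 2\pi c_\epsilon^2 - \log(1+\pi R^2) + 2\log\delta - 4\pi A_{x_0} + o(1)$.

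To extract the precise constant I would sharpen the left-hand side to order $1/c_\epsilon^2$. Using $\|\nabla_g u_\epsilon\|_2^2=1$ with the $C^1_{\mathrm{loc}}$-convergences $\varphi_\epsilon\to\varphi$ on the inner disc and $c_\epsilon u_\epsilon\to G$ on the complement of $\phi^{-1}(\mathbb{B}_\delta)$, I obtain
\begin{equation*}
\int_{A_{R,\delta,\epsilon}}\!|\nabla_g u_\epsilon|^2\, dv_g = 1 - \frac{1}{c_\epsilon^2}\Bigl[\int_{\mathbb{B}_R}\!|\nabla\varphi|^2\,dx + \int_{\Sigma\setminus\phi^{-1}(\mathbb{B}_\delta)}\!|\nabla_g G|^2\,dv_g\Bigr] + o(1/c_\epsilon^2).
\end{equation*}
A direct computation from (\ref{7}) gives $\int_{\mathbb{B}_R}|\nabla\varphi|^2\,dx = (4\pi)^{-1}\bigl(\log(1+\pi R^2)-1\bigr) + o_R(1)$, while integrating by parts on $\Sigma\setminus\phi^{-1}(\mathbb{B}_\delta)$ against (\ref{e-G}) and using (\ref{G}) at the inner boundary gives $\int_{\Sigma\setminus\phi^{-1}(\mathbb{B}_\delta)}|\nabla_g G|^2\,dv_g = A_{x_0} - (2\pi)^{-1}\log\delta - \alpha\|G\|_p^2 + o_\delta(1)$. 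Inserting everything into the capacity inequality (where $\log(\delta/(Rr_\epsilon)) = 2\pi c_\epsilon^2 + O(1)$ keeps cross-terms under control), rearranging to isolate $2\log r_\epsilon + \alpha_\epsilon c_\epsilon^2$, and using $\alpha_\epsilon c_\epsilon^2 - 4\pi c_\epsilon^2 = 4\pi\alpha\|c_\epsilon u_\epsilon\|_p^2 - \epsilon c_\epsilon^2 + o(1)\to 4\pi\alpha\|G\|_p^2$, the $\alpha\|G\|_p^2$ contributions from the nonlinear term in (\ref{e-G}) and from the $L^p$-correction in $\alpha_\epsilon$ cancel, so that after sending $\epsilon\to 0$, then $R\to\infty$, then $\delta\to 0$, we reach the desired bound $r_\epsilon^2 e^{\alpha_\epsilon c_\epsilon^2}\leq \pi e^{1+4\pi A_{x_0}} + o(1)$.

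The main obstacle is precisely this first-order bookkeeping. The crude estimate $\int_{A_{R,\delta,\epsilon}}|\nabla_g u_\epsilon|^2\leq 1$ is insufficient, because an $o_\epsilon(1)$ error multiplied by $\log(\delta/(Rr_\epsilon))\sim 2\pi c_\epsilon^2$ would overwhelm the $O(1)$ corrections we must track. Producing the $1/c_\epsilon^2$-refined energy identity, verifying the auxiliary fact $\epsilon c_\epsilon^2\to 0$ so that $\alpha_\epsilon c_\epsilon^2-4\pi c_\epsilon^2$ has a clean limit, and carrying out the integration by parts so that the nonlinear $\alpha\|G\|_p^2$ cancellation falls out with the exact coefficient, are the delicate technical points specific to the $L^p$-norm setting.
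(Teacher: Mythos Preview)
Your approach is the same as the paper's: the capacity (harmonic-comparison) inequality on the annulus $\mathbb{B}_\delta\setminus\mathbb{B}_{Rr_\epsilon}(\bar x_\epsilon)$, combined with the $O(1/c_\epsilon^2)$-refined energy splitting computed from $\varphi$ on the inner disc and from $G$ on the outer piece, and you correctly single out the $L^p$-specific feature, namely the cancellation between the $\alpha\|G\|_p^2$ contribution arising from integrating $G$ against the nonlinear term in~(\ref{e-G}) and the one arising from $\alpha_\epsilon c_\epsilon^2-4\pi c_\epsilon^2$.

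Two minor corrections. First, the sign in your outer energy computation should read $+\alpha\|G\|_p^2$: in the paper's convention $\Delta_g$ is the positive Laplacian (compare (\ref{e-u}) with (\ref{e-psi})--(\ref{e-phi})), so integration by parts against~(\ref{e-G}) on $\Sigma\setminus\phi^{-1}(\mathbb{B}_\delta)$ yields $+\alpha\|G\|_p^{2-p}\int_\Sigma|G|^p=+\alpha\|G\|_p^2$, and with this sign the cancellation does go through cleanly. Second, you do not need to verify $\epsilon c_\epsilon^2\to 0$. If instead of isolating $2\log r_\epsilon+\alpha_\epsilon c_\epsilon^2$ you solve directly for $\log(\lambda_\epsilon/c_\epsilon^2)$, using $2\log r_\epsilon=\log\bigl(\lambda_\epsilon/(\beta_\epsilon c_\epsilon^2)\bigr)-\alpha_\epsilon c_\epsilon^2$ from~(\ref{r}) inside $\log(\delta/(Rr_\epsilon))$, then the residual term $-\tfrac12\epsilon c_\epsilon^2(1+\alpha\|u_\epsilon\|_p^2)$ lands on the right-hand side with the favorable sign and may simply be discarded. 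This is exactly the bookkeeping the paper uses in passing from (\ref{17})--(\ref{22}) to the final inequality for $\log(\lambda_\epsilon/c_\epsilon^2)$.
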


	\begin{proof}
To derive an upper bound of $\sup _ { u \in \mathcal { H } } J _ { 4 \pi } ^ { \alpha } ( u )$, we use the capacity estimate, which was first used by Li \cite{Li-JPDE} in this topic. We take an isothermal coordinate system $(U,\phi)$ near $x_0$ such that $\phi(x_0) = 0$. In such coordinates, the metric $g$ has the representation $g = e^{2f} (dx_1^2 +dx_2^2 )$ and $f$ is a smooth function with $f(0) = 0$. Denote $\bar{u}_\epsilon=u_\epsilon\circ\phi^{-1}$.

We claim that
\begin{equation}\label{16}
\underset{\epsilon \rightarrow 0}{\lim}\ \frac{\lambda _{\epsilon}}{c_{\epsilon}^{2}}\leq \pi e^{1+4\pi A_{x_0}}.
\end{equation}
To confirm this claim, we set
$W_{a,b}:=\left\{ \bar{u}_\epsilon\in W^{1,2}\left(\mathbb{B}_\delta \setminus \mathbb{B}_{Rr_\epsilon}\right)  :\ \bar{u}_\epsilon({\delta})=a, \, \bar{u}_\epsilon\left({Rr_\epsilon}\right)=b \right\}$ for some small $\delta\in(0,1)$ and some fixed $R>0$.
According to (\ref{6}), (\ref{7}), (\ref{G}) and Lemma \ref{LG}, one gets
 $$
 a=\frac{1}{c_{\epsilon}}\left( \frac{1}{2\pi}\log \frac{1}{\delta}+A_{x_0}+o_\delta(1)+o_{\epsilon}(1)\right)$$ and
$$
b=c_{\epsilon}+\frac{1}{c_{\epsilon}}\left( -\frac{1}{4\pi}\log ( 1+{\pi}R^{2} ) +o_{\epsilon}\left( 1 \right) \right),$$
where $o_\delta(1)\rightarrow0$, $o_{\epsilon}(1)\rightarrow0$ as $\epsilon\rightarrow 0$ .
From a direct computation, there holds
\begin{equation}\label{17}
2\pi \left( a-b \right) ^2=2\pi c_{\epsilon}^{2}+\text{2}\log \delta -4\pi A_{x_0}-\log ( 1+\pi R^2 ) +o_{\delta}\left( 1 \right) +o_{\epsilon}\left( 1 \right).
\end{equation}
From the direct method of variation, it is easy to see that
$\inf_{u\in W_{a,b}} \int_{{{Rr_\epsilon}}\leq |x| \leq {\delta}}{|\nabla_{\mathbb{R}^2} u|^2}dx$ can be attained by $m(x)\in W_{a,b}$ with $\Delta_{\mathbb{R}^2} m\left( x \right) =0$.
We can check that
\begin{equation*}
m\left( x \right) =\frac{a\left( \log|x|-\log \left( {Rr_\epsilon} \right) \right) +b\left( \log \delta -\log|x| \right)}{\log \delta -\log \left( {Rr_\epsilon} \right)}
\end{equation*}
and
\begin{equation}\label{19}
\int_{{{Rr_\epsilon}}\leq |x| \leq {\delta}}{|\nabla_{\mathbb{R}^2} m\left( x \right) |^2}dx=\frac{2\pi \left( a-b\right) ^2}{\log \delta -\log \left( {Rr_\epsilon} \right)}.
\end{equation}
Recalling (\ref{e-u}) and (\ref{r}), we have
\begin{equation}\label{20}
\log \delta -\log \left( {Rr_\epsilon} \right) =\log \delta -\log R-\frac{1}{2}\log \frac{\lambda _{\epsilon}}{c_{\epsilon}^{2}}+{2\pi c_{\epsilon}^{2} }+o_\epsilon(1).
\end{equation}
Letting $u^*_{\epsilon}=\max \left\{ a,\ \min \left\{b,\ \bar{u}_{\epsilon} \right\} \right\} \in W_{a,b}$, one gets $\left| \nabla_{\mathbb{R}^2} u^*_{\epsilon} \right|\leq \left| \nabla_{\mathbb{R}^2} \bar{u}_{\epsilon} \right|$ in $\mathbb{B}_{\delta}\backslash \mathbb{B}_{Rr_{\epsilon}^{\text{1/}\left( 1+\beta \right)}}$ for sufficiently small $\epsilon$.
According to these and Lemma 1, we obtain
\begin{equation}\label{21}
  \begin{aligned}
 \int_{Rr_{\epsilon}\le |x|\le \delta}{|\nabla _{\mathbb{R}^2}m\left( x \right) |^2}dx
 &\le \int_{Rr_{\epsilon}\le |x|\le \delta}{|\nabla _{\mathbb{R}^2}u_{\epsilon}^{*}\left( x \right) |^2}dx
\\
&\le \int_{Rr_{\epsilon}\le |x|\le \delta}{|\nabla _{\mathbb{R}^2}\bar{u}_{\epsilon}\left( x \right) |^2}dx
\\
&\le 1-\int_{\Sigma \backslash \phi ^{-1}\left( \mathbb{B}_{\delta} \right)}{\left| \nabla_g u_{\epsilon} \right|}^2dv_g-\int_{\phi ^{-1}\left( \mathbb{B}_{Rr_{\epsilon}} \right)}{\left| \nabla_g u_{\epsilon} \right|}^2dv_g.
  \end{aligned}
\end{equation}
Now we compute $\int_{\Sigma \backslash \phi ^{-1}\left( \mathbb{B}_{\delta} \right)}{\left| \nabla_g u_{\epsilon} \right|}^2dv_g$ and $\int_{\phi ^{-1}\left( \mathbb{B}_{Rr_{\epsilon}} \right)}{\left| \nabla_g u_{\epsilon} \right|}^2dv_g$.
Integration by parts and (\ref{G}) lead to
\begin{equation*}
\int_{\Sigma \backslash \phi ^{-1}\left( \mathbb{B}_{\delta} \right)}{|}\nabla_g G|^2dv_g=-\frac{1}{2\pi}\log \delta +A_{x_0}+\alpha \lVert G \rVert _{2}^{2}+o_{\epsilon}\left( 1 \right) +o_{\delta}\left( 1 \right).
\end{equation*}
According to (\ref{phi}), (\ref{6}) and (\ref{7}), one gets
\begin{equation}\label{22}
\int_{\phi ^{-1}\left( \mathbb{B}_{Rr_\epsilon} \right)}{\left| \nabla_g u_{\epsilon} \right|}^2dv_g=\frac{1}{c_{\epsilon}^{2}}\left( \frac{1}{4\pi}\log ( 1+\pi R^2  ) -\frac{1}{4\pi}+o_\epsilon\left( 1 \right)+o_{R}\left( 1 \right)\right),
\end{equation}
where $o_R\left( 1 \right) \rightarrow 0$ as $R\rightarrow +\infty $.
In view of (\ref{17})-(\ref{22}), we have
\begin{equation*}
\log \frac{\lambda _{\epsilon}}{c_{\epsilon}^{2}}\leq \log {\pi}+1+4\pi A_{x_0} +o(1),
\end{equation*}
where $o\left( 1 \right) \rightarrow 0$ as $\epsilon \rightarrow 0$ first, then $R\rightarrow +\infty $ and $\delta \rightarrow 0$.
Hence (\ref{16}) is followed.
From (\ref{16}) and Lemma \ref{L6}, we lead to the lemma.
	\end{proof}
The proof of ($ii$) follows immediately from Lemma \ref{L8} under the hypothesis of $c_{\epsilon}\rightarrow +\infty$.
When $|c_{\epsilon}|\leq C$, using Lebesgue's dominated convergence, we get
	\begin{equation}\label{18}
J _{4\pi}^\alpha(u_0)=\sup_{u\in \mathcal{H}}J _{4\pi}^\alpha(u).
	\end{equation}
And it is easy to see $u_0\in\mathcal{H}$.
Therefore, Part ($ii$) holds.

\section{Proof of Part ($iii$)}

The content in this section is carried out under the hypotheses $0 \leq \alpha < \lambda_{ p}$, $c_{\epsilon}\rightarrow +\infty$ and $x _ { \epsilon } \rightarrow x _ { 0 } \in \Sigma$.
Set a cut-off function
$\xi \in C _ { 0 } ^ { \infty } \left( B _ { 2 R \epsilon } ( x_0 ) \right)$ with $\xi = 1$ on $B _ { R \epsilon } ( x_0)$ and $\| \nabla \xi \| _ { L ^ { \infty } } = O (  (  R \epsilon )^{-1} )$. Denote $\tau =G+(2\pi)^{-1}\log r-A_{x_0}$, where $G$ is defined as (\ref{G}). Let $R= -\log \epsilon $, then $R\rightarrow+\infty$ and $R\epsilon\rightarrow0$ as $\epsilon\rightarrow0$.
We construct a blow-up sequence
	\begin{equation}\label{23}
    v_{\epsilon}=\left\{\begin{aligned}
	&\frac{c^2-\frac{\log ( 1+\frac{\pi r^2}{\epsilon ^2} )}{4\pi} +B}{\sqrt{c^2+\alpha \lVert G \rVert _{p}^{2}}},&		\,\,& r\le R\epsilon,\\
	&\frac{G-\xi \tau}{\sqrt{c^2+\alpha \lVert G \rVert _{p}^{2}}},&		\,\,\ \ &R\epsilon <r<2R\epsilon,\\
	&\frac{G}{\sqrt{c^2+\alpha \lVert G \rVert _{p}^{2}}},&		\,\,&r\ge 2R\epsilon,\\
\end{aligned} \right.
	\end{equation}
where $r=dist(x,x_0)\geq0$ and $B$, $c$ are constants to be determined later.
In order to assure that $v _{\epsilon}\in \mathcal{H} \cap C^{\infty}( \Sigma)$, we obtain
$$c^2-\frac{1}{4\pi}\log ( 1+\pi R^2 ) +B=-\frac{1}{2\pi}\log ( R\epsilon) +A_{x_0},$$
that is to say,
	\begin{equation}\label{c1}
c^2=\frac{1}{4\pi}\log \pi -B-\frac{1}{2\pi}\log \epsilon +A_{x_0}+O(R^{-2}).
	\end{equation}
From $\lVert \nabla_g v _{\epsilon} \rVert _2=1$, there holds
\begin{equation}\label{c2}
{c}^2=A_{x_0}-\frac{\log \epsilon}{2\pi}+\frac{\log \pi}{4\pi}-\frac{1}{4\pi}+O(R^{-2}) +O( R\epsilon \log ( R\epsilon ) ) +o_{\epsilon}\left( 1 \right).
\end{equation}
According to (\ref{c1}) and (\ref{c2}), one gets
	\begin{equation}\label{B}
B = \frac { 1 } { 4 \pi } + O(R^{-2})+ O ( R \epsilon \log ( R \epsilon ) )+o_{\epsilon}\left( 1 \right).
	\end{equation}
A delicate and simple calculation shows
\begin{equation}\label{28}
  \lVert v_{\epsilon} \rVert _{p}^{2}=\frac{\lVert G \rVert _{p}^{2}+O\left( R\epsilon \log \left( R\epsilon \right) \right)}{c^2+\alpha \lVert G \rVert _{p}^{2}}
  \ge \frac{\lVert G \rVert _{p}^{2}+O( R\epsilon \log ( R\epsilon ))}{c^2}\left( 1-\frac{\alpha \lVert G \rVert _{p}^{2}}{c^2} \right) .
\end{equation}
It follows that on $\left(B _ { R \epsilon } \left( x _ { 0 } \right),\ g\right)$
$$4\pi v_{\epsilon}^{2}( 1+\alpha \lVert v_{\epsilon} \rVert _{p}^{2} ) \ge 4\pi c^2-\text{2}\log \left( 1+\pi \frac{r^2}{\epsilon ^2} \right) +8\pi B-\frac{4\pi \alpha ^2\lVert G \rVert _{p}^{4}}{c^2}+O\left( \frac{\log R}{c^4} \right).$$
Setting $ { v }^* _ { \epsilon } =   \int _ { \Sigma } v _ { \epsilon } dv_g/{ {\rm Area}( \Sigma) }$,
one gets $ { v }^* _ { \epsilon } = O ( ( R \epsilon ) ^ { 2 } \log \epsilon)$.
Hence by (\ref{c1})-(\ref{28}), there holds
	\begin{equation}\label{27}
\int_{B_{R\epsilon}( x_0 )}{e}^{4\pi ( v_{\epsilon}-v_{\epsilon}^{*} ) ^2( 1+\alpha \lVert v_{\epsilon}-v_{\epsilon}^{*} \rVert _{2}^{2} )}\,dv_g\ge \pi e^{1+4\pi A_{x_0}}-\frac{4\pi \alpha ^2\lVert G \rVert _{p}^{4}}{c^2}e^{1+4\pi A_{x_0}}+O\left( \frac{\log R}{c^4} \right) +O\left( \frac{\log\log \epsilon}{R^2} \right).
	\end{equation}	
On the other hand, from the fact of $e^t\geq t+1$ for any $t>0$ and (\ref{23}), we get
	\begin{equation}\label{24}
	\begin{aligned}
\int_{\Sigma \backslash B_{R\epsilon}( x_0)}{e}^{4\pi ( v_{\epsilon}-v_{\epsilon}^{*} ) ^2( 1+\alpha \lVert v_{\epsilon}-v_{\epsilon}^{*} \rVert _{2}^{2})}\,dv_g
&\ge \int_{\Sigma \backslash B_{2R\varepsilon}( x_0 )}{( 1+4\pi ( v_{\epsilon}-v_{\epsilon}^{*} ) ^2)}\,dv_g\\
    &\ge \quad {\rm Area}\left( \Sigma \right) +\frac{4\pi \lVert G \rVert _{2}^{4}}{c^2}+O\left( \frac{\log R}{c^4} \right) +O\left( R\epsilon \right).
	\end{aligned}
	\end{equation}
From (\ref{27}) and (\ref{24}), there holds
	\begin{equation*}\label{25}
    \begin{aligned}
\int_{\Sigma}{e}^{4\pi ( v_{\epsilon}-v_{\epsilon}^{*} ) ^2( 1+\alpha \lVert v_{\epsilon}-v_{\epsilon}^{*} \rVert _{2}^{2} )}\,dv_g\ge
& {\rm Area}\,( \Sigma ) +\pi e^{1+4\pi A_{x_0}}+\frac{4\pi \lVert G \rVert _{2}^{2}}{c^2}\left( 1-\frac{\pi \alpha ^2\lVert G \rVert _{p}^{4}}{\lVert G \rVert _{2}^{2}}e^{1+4\pi A_{x_0}} \right)\\
& +O\left( \frac{\log\log \epsilon}{R^2} \right) +O\left( \frac{\log R}{c^4} \right) +O\left( R\epsilon \right).
    \end{aligned}	
    \end{equation*}
    According to $R= -\log \epsilon $ and (\ref{c1}), we obtain
	\begin{equation}\label{26}
J _ { 4 \pi } ^ { \alpha } ( v _ { \epsilon } - v^*_ { \epsilon } ) > {\rm Area}\,(\Sigma) + \pi e ^ { 1 + 4 \pi A _ {x_0} }
    \end{equation}
for sufficiently small $\alpha$ and $\epsilon$.
The contradiction between (\ref{3}) and (\ref{26}) indicates that $c_{\epsilon}$ must be bounded when $\alpha$ is sufficiently small.
Obviously, $u_0$ is the extremal function under the assumption that $c_\epsilon$ is bounded from (\ref{18}).
Therefore, we lead to Part ($iii$).
$\hfill\Box$

\section*{References}

\end{document}